\newenvironment{enumeratei}{\begin{enumerate}[\upshape (i)]}%
                            {\end{enumerate}}
\newcommand \margnote [1] {}
\numberwithin{equation}{section}
\theoremstyle{plain}
 \newtheorem{theorem}{Theorem}[section]
 \newtheorem{lemma}[theorem]{Lemma}
 \newtheorem{proposition}[theorem]{Proposition}
 \newtheorem{corollary}[theorem]{Corollary}
\theoremstyle{definition}
  \newtheorem{remark}[theorem]{Remark}
  \newtheorem{example}[theorem]{Example}
  \newtheorem*{ackno}{Acknowledgment}
\newcommand \tbf [1]{\textbf{#1}}
\renewcommand \emptyset {\varnothing}
\renewcommand \phi {\varphi}
\renewcommand \rho {\varrho}
\newcommand{\id}{{\rm id}}
\renewcommand{\ker}{{\rm ker}\,}
\renewcommand{\min}{{\rm min}}
\renewcommand{\max}{{\rm max}}
\newcommand\meet{\wedge}
\newcommand{\var}[1]{{\mathcal #1}}
\newcommand{\alg}[1]{{\bf #1}}
\newcommand \conj {\mathrel{\&}}
\newcommand \set[1] {\{#1\}}
\newcommand \then {\mathrel{\Rightarrow}}
\newcommand \opposit [1] {{#1}^{\ast}}
\newcommand \FF {G} 
\newcommand \ciklgroupletter {{^{\textup{gr}}\kern-1ptC}}
\newcommand \cikl [1] {\ciklgroupletter_{#1} } 
\newcommand \algcikl [1] {\alg{\ciklgroupletter}_{#1} } 
\newcommand \ciklhat [2] { \ciklgroupletter_{#1}^{#2} } 
\newcommand \twm [2] { #2 \times_{\kern -1pt \textup{tw}}  #1}
\newcommand \gtwm [3] {\mathbf N(#1,\alg #2,\alg #3)} 
\newcommand \pair [2] {\langle #1,#2\rangle}
\newcommand\filter[1]{\mathord\uparrow #1}
\newcommand \gensub [1] {[#1]} 
\newcommand \genpsub [2] {[#2]_{#1}} 
\newcommand \freeagsign  {F}
\newcommand \freeaset [2] {\freeagsign_{#1}(#2)}
\newcommand \freealg [2] {\alg\freeagsign_{#1}(#2)}
\newcommand\mbetu {M}
\newcommand\qalset [2]{\mbetu(#1,#2)} 
\newcommand\qalgeb [2]{\alg \mbetu(#1,#2)} 
\newcommand \mmalg {\qalgeb{\sgrp}{\alg\FF}} 
\newcommand \mmset {\qalset{\sgrp}{\alg\FF}} 
\newcommand \rep {T} 
\newcommand\sgrp {H} 
\newcommand \csillegy {\mathrel{\mathord=^\ast} }
\newcommand \Sub [1] {\textup{Sub}(#1)}
\newcommand \minqvar [1] {\textup{MinQVar}(#1)}
\newcommand \minqvan [1] {\textup{MinQVar}_0(#1)}
\newcommand \minqvae [1] {\textup{MinQVar}_1(#1)}
\newcommand \minqvak [1] {\textup{MinQVar}_2(#1)}
\newcommand \restrict [2] {{#1}\kern-1pt \rceil_{\kern-1pt #2}}
\newcommand \auxset {D(\alg\FF)}
\newcommand \slat[1] { \alg{#1}\textup{-SLat} }
\begin{document} 

\title{Minimal quasivarieties of semilattices over commutative groups}
\author[I.\ V.\ Nagy]{Ildik\'o V.\ Nagy}
\address{Szeged, Hungary, ildi.ildiko.nagy@gmail.com}

\subjclass[2010]{Primary {06A12}, 
secondary {08A35}}

\keywords{{Semilattice over a group, group extension of semilattices, minimal quasivariety}}

\begin{abstract} We  continue some recent investigations of  W.\ Dziobiak,  J.\ Je\v zek, and M.\ Mar\'oti. Let $\alg \FF=\langle \FF ,\cdot\rangle $ be a commutative group. 
A \emph{semilattice over $\alg\FF$}  is a semilattice enriched with $\FF $ as a set of unary operations acting as semilattice automorphisms.  We prove that the  minimal quasivarieties of semilattices over a finite abelian group $\alg \FF$ are in one-to-one correspondence with the subgroups of $\alg \FF$. If $\alg\FF$ is not finite, then we reduce the description of minimal quasivarieties to that of those minimal quasivarieties in which 
not every algebra has a zero element.
\end{abstract}
%

\maketitle

\section{Introduction}\label{sec:intro}
Let $\tau$ be  a homomorphism from an abelian group $\alg \FF=\langle \FF ;\cdot,\id\rangle$ to the automorphism group  of a semilattice $\langle A;\wedge\rangle$. Then the elements $g\in\FF$ become unary operations $g(x)=\tau(g)(x)$ on $A$, and the algebra $\alg A=\langle A;\meet,\FF  \rangle$ obtained this way is a \emph{$\alg \FF $-semilattice}, also called a \emph{semilattice over $\alg \FF$}. Semilattices over abelian groups or their term equivalent variants were investigated in several papers, to be mentioned soon. In particular, W.\ Dziobiak,  J.\ Je\v zek, and M.\ Mar\'oti~\cite{DJM} described the minimal quasivarieties of semilattices over the infinite cyclic group. 
Our goal is to extend their result to other abelian groups. 

Clearly, each minimal quasivariety $\var R$ of semilattices over an abelian group $\alg\FF$ is determined by its $1$-generated free algebra $\freealg{\var R}1$, provided $\freealg{\var R}1$ is nontrivial. Our first  result, Theorem~\ref{t:eq}, describes these minimal $\var R$ by characterizing the free algebras $\freealg{\var R}1$. If $\alg\FF$ is a finite abelian group, then Theorem~\ref{fIndSrpt} gives a much more explicit description by establishing a bijective correspondence between the minimal quasivarieties of $\alg\FF$-semilattices and the subgroups of $\alg \FF$. For infinite abelian groups, only a less explicit description of the minimal quasivarieties $\var R$ is given in Theorem~\ref{rdThemt} since  Theorem~\ref{t:eq} in itself is insufficient for a complete understanding of those $\freealg{\var R}1$  that have no  zero element. 

\subsection{Outline} Section~\ref{secPrelim} gives the basic concepts and notation, including some earlier results. Section~\ref{secmorabout} is devoted to easy statements on $\alg\FF$-semilattices. The first result is stated and proved in Section~\ref{secMain}. Section~\ref{seCtwoconstr} gives two constructions that yield minimal quasivarieties. Minimal quasivarieties of semilattices over finite abelian groups are completed described in Section~\ref{seCfinigrp}, while Section~\ref{secNonFini} is devoted to the infinite case. Finally, Section~\ref{sectiExmpl} points out why the infinite case is much subtler than the finite one.

\section{Preliminaries}\label{secPrelim}
\subsection{Basic concepts and notation}
For a second look at the key concept, an algebra $\alg A=\langle A;\meet,\FF  \rangle$ is called 
a \emph{$\alg \FF $-semilattice}, or a \emph{semilattice over $\alg \FF$},  if $\alg \FF =\langle \FF ;\cdot,\id\rangle$ is a commutative  group, 
the elements of $\FF$ are unary operations acting on the set $A$, and the following identities hold: 
\begin{enumeratei}
\item\label{Fsldefa} $\wedge$ is an associative, idempotent, and commutative operation;
\item\label{Fsldefb} $\id(x)\approx x$;
\item\label{Fsldefc} $f(g(x))\approx (f\cdot g)(x)$ for every $f,g\in \FF $; 
\item\label{Fsldefd} $g(x)\wedge g(y)\approx g(x\wedge y)$ for every $g\in \FF $.
\end{enumeratei}
This definition is due to M.~Mar\'oti~\cite{MM}. 
Axioms \eqref{Fsldefa}--\eqref{Fsldefd} imply that, for every $f\in \FF $,  the map $x\mapsto f(x)$ is an automorphism of the semilattice reduct $\langle A;\meet\rangle$.  
A $\alg \FF $-semilattice  $\alg A$ is \emph{trivial} if it is a singleton. 
If $g(x)=x$ holds for all $x\in A$ and $g\in \FF $, then $\alg A=\langle A;\wedge, \FF \rangle$ is called \emph{$\alg \FF $-trivial.}

Following J.~Je\v zek~\cite{JJ}, an algebra $\langle A;\wedge, g,g^{-1} \rangle$ is a \emph{semilattice with an automorphism} if $\langle A;\wedge \rangle$ is a semilattice, and the unary operations $g$ and $g^{-1}$ are reciprocal automorphisms  of $\langle A;\wedge \rangle$. If  $\alg \FF$ happens to be a cyclic group generated by $g$, then the $\alg\FF$-semilattice $\alg A=\langle A;\wedge, \FF \rangle$ is term equivalent to 
the semilattice  $\langle A;\wedge, g,g^{-1} \rangle$ with an automorphism, that is, these two algebras have the same term functions. For $n\in\mathbb N\cup\set\infty=\set{1,2,\dots,\infty}$, the $n$-element cyclic group is denoted by $\algcikl n$; in this context,  $g$ always stands for a generating element of $\algcikl n$. Notice that a $\algcikl n$-semilattice $\langle A;\wedge, \cikl n \rangle$  is uniquely determined by (but, in lack of $g^{-1}$, not necessarily term equivalent to) its reduct $\langle A;\wedge, g\rangle$;  we often rely on this fact implicitly.

A \emph{quasi-identity} (also called \emph{Horn formula}) is a universally quantified sentence of the form
$(p_1\approx q_1 \conj \cdots \conj  p_n\approx q_n) \then  p\approx q,$
where  $n\in\mathbb N_0=\set{0,1,2,\ldots}$ and $p_1,q_1,\ldots, p_n,q_n,p,q$ are terms.
\emph{Quasivarieties} are classes of (similar) algebras defined by quasi-identities. The least quasivariety  and the least variety containing a given algebra $\alg A$ are denoted by $\var Q(\alg A)$ and $\var V(\alg A)$, respectively. A quasivariety is  \emph{trivial} if it consists of trivial algebras. A nontrivial quasivariety is \emph{minimal} if it has exactly one proper subquasivariety, the trivial one. For concepts and notation not defined in the paper, the  reader is referred to S.\ Burris and H.\,P.\ Sankappanavar~\cite{rBurSan}.

\subsection{Earlier results motivating the present investigations} 
The systematic study of semilattices with an automorphism started in 
J.~Je\v zek~\cite{JJ}, where the simple ones and the subdirectly irreducible ones were described. The simple semilattices with two commuting automorphisms, which can also be considered $({\algcikl\infty}\times{\algcikl\infty})$-semilattices (up to term equivalence), were described in J.~Je\v zek~\cite{J}. Generalizing this result, M.~Mar\'oti~\cite{MM} characterized the simple $\alg\FF$-semilattices for every abelian group~$\FF$. 
\begin{figure}
\includegraphics[scale=1.0]{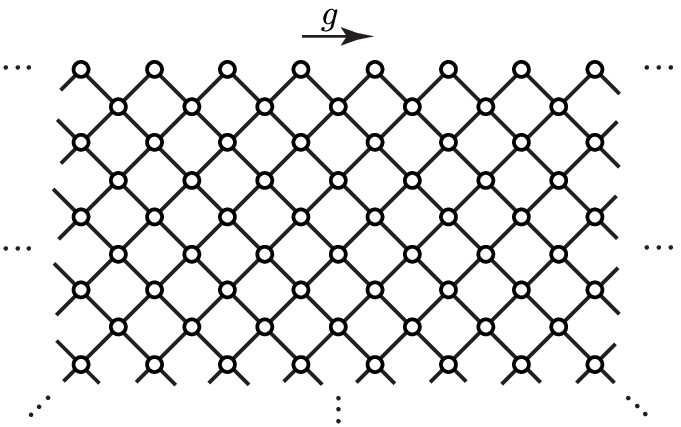}
\caption{$\alg C_1$} 
\label{figone}
\end{figure}
The minimal quasivarieties of $\alg{\cikl\infty}$-semilattices were described by W.~Dziobiak,~J.~Je\v zek, and M.~Mar\'oti~\cite{DJM}. Their result, to be detailed soon, is  equivalent to the description of minimal quasivarieties of semilattices with an automorphism. To recall the result of \cite{DJM} in an economic way, we define two concepts.

In general, the \emph{opposite} of a group $\alg \FF=\langle\FF,\cdot\rangle $ is $\opposit{\alg \FF }=\langle\opposit \FF ,\ast \rangle$, where 
$\opposit \FF =\FF $ and $x\mathop{\ast} y:=y\cdot x$. Note that $\opposit{\alg \FF }=\alg \FF $ in our case since $\alg \FF $ is assumed to be commutative. 
For a $\alg \FF $-semilattice  $\alg A=\langle A;\wedge, \FF \rangle$, the  \emph{opposite} of  $\alg A$ is $\opposit{\alg A}=\langle A;\wedge,\opposit \FF \rangle$, where 
$\opposit g(x)=g^{-1}(x)$ for $g\in \opposit \FF =\FF $ and $x\in A$. Then  $\opposit{\alg A}$ is a $\opposit{\alg \FF }$-semilattice (even without assuming the commutativity of $\FF$), and it can be different from $\alg A$ (even when $\alg \FF $ is commutative).

Let $n\in\mathbb N$, and let $\alg A=\langle A;\wedge,\cikl\infty\rangle$ be a $\alg{\cikl\infty}$-semilattice. Remember that $\alg{\cikl\infty}$ is generated by $g$.
 We define a new $\alg{\cikl\infty}$-semilattice $\twm{\alg A}n$ , the \emph{$n$-fold twisted multiple} of $\alg A$, as follows.
\begin{equation}\label{nfoldtwm}
\begin{aligned}
\twm{\alg A}n=\bigl\langle \set o&\cup 
\set{\pair ai: a\in A,\,\, 0\leq i < n};
\wedge, \cikl\infty
\bigr\rangle,\text{ where} \cr
\pair ai\wedge \pair bj&=\begin{cases}\pair {a\wedge b}i& \text{if }i=j,\cr
o&\text{if }i\neq j
  \end{cases}, \quad \pair ai\wedge o=o\wedge o=o, \cr
g(\pair ai)&=\begin{cases}\pair a{i+1}& \text{if }i<n-1,\cr
\pair{g(a)}0&\text{if }i=n-1
  \end{cases},\quad g(o)=o\text.
\end{aligned}
\end{equation}
The trivial (that is, one-element)  $\algcikl \infty$-semilattice is denoted by $\mathbf o$. We define the following  $\algcikl\infty$-semilattices. 

\begin{allowdisplaybreaks}
\begin{enumeratei}
\item\label{DJma} $\alg A_k=\twm {\mathbf o} k$ for $k\in \mathbb N$. 
\item\label{DJmb} $\alg A_\infty=\langle \mathbb Z\cup\set o;\wedge, \FF \rangle$, where $g(o)=o$, $g(i)=i+1$, $o$ is the zero element of the semilattice reduct, and $i\wedge j=o$ for $i\neq j\in\mathbb N$.
\item\label{DJmc} $\alg B_1^+ =\langle\mathbb Z;\min, \cikl\infty\rangle$, where $g(i)=i+1$.
\item\label{DJmd} $\alg B_k^+= \twm {\mathbf B_1^+} k$ for $2\leq k\in \mathbb N$.
\item\label{DJme} $\alg B_1^- =\langle\mathbb Z;\min, \cikl\infty\rangle$, where $g(i)=i-1$; note that $\alg B_1^-=\opposit{(\alg B_1^+)}$.
\item\label{DJmf} $\alg B_k^-= \twm {\mathbf B_1^-} k$ for $2\leq k\in \mathbb N$; note that $\alg B_k^-=\opposit{(\alg B_k^+)}$.
\item $\alg C_1=\bigl\langle\set{\pair xy \in\mathbf Z^2: x\leq y};\wedge, \cikl\infty \bigr\rangle$, where $g(\pair xy)=\pair{x+1}{y+1}$ and 
$\pair{x_1}{y_1}\wedge \pair{x_2}{y_2} = 
\pair{\min(x_1,x_2)}{\max(y_1,y_2)}$. The Hasse diagram of the semilattice reduct is depicted in Figure~\ref{figone}, and $g$ is the shift operation to the right by one unit (that is, by the unit vector given in the figure).
\item\label{DJmg} $\alg C_k= \twm {\alg C_1}k$ for $2\leq k\in\mathbb N$.
\end{enumeratei}
\end{allowdisplaybreaks}

With reference to the list above, now we are ready to recall the main result of  W.~Dziobiak,~J.~Je\v zek, and M.~Mar\'oti~\cite{DJM}.

\begin{theorem}[\cite{DJM}]\label{djezmThm} The minimal quasivarieties of $\algcikl \infty$-algebras are precisely the
quasivarieties generated by one of the 
algebras \eqref{DJma}--\eqref{DJmg}. These minimal quasivarieties are pairwise distinct.
\end{theorem}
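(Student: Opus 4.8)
The plan is to reduce the problem to classifying the $1$-generated free algebras of minimal quasivarieties and then to match the resulting list against \eqref{DJma}--\eqref{DJmg}. For the reduction, first note that a quasi-identity failing in an algebra already fails in the subalgebra generated by a witnessing tuple, so every quasivariety is generated by its finitely generated members; hence a minimal quasivariety $\var R$ equals $\var Q(\alg B)$ for \emph{every} nontrivial finitely generated $\alg B\in\var R$. If $\var R\models g(x)\approx x$, then every member is $\algcikl\infty$-trivial, and since each nontrivial semilattice has a two-element subsemilattice --- which as a $\algcikl\infty$-algebra is $\twm{\mathbf o}1=\alg A_1$ --- minimality forces $\var R=\var Q(\alg A_1)$. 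Otherwise some member has an element $b$ with $g(b)\ne b$, the one-generated subalgebra $[b]$ is nontrivial, $\var R=\var Q([b])$, and since $\freealg{\var R}1$ maps onto $[b]$ it is nontrivial and $\var R=\var Q(\freealg{\var R}1)$. So it remains to prove: (1) $\var Q(\alg D)$ is minimal for each $\alg D$ on the list \eqref{DJma}--\eqref{DJmg}; (2) these quasivarieties are pairwise distinct; (3) if $\var R$ is minimal and $\alg D:=\freealg{\var R}1$ is nontrivial with $g\ne\id$ on $\alg D$, then $\alg D$ is one of \eqref{DJma}--\eqref{DJmg}.

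\emph{Classifying $\alg D$ (step (3)).} Writing $\alg D=[d]$, every element is a finite meet of shifts $g^n(d)$, so $\alg D$ is a shift-respecting image of the free one-generated $\algcikl\infty$-semilattice (realised as the finite nonempty subsets of $\mathbb Z$ under union, with $g$ acting by $n\mapsto n+1$). I would then split on whether the descending chain $u_n:=\bigwedge_{|i|\le n}g^i(d)$ stabilises. If it does, $z:=u_n$ for large $n$ is a $g$-fixed least element; partition $\alg D\setminus\set{z}$ by the equivalence generated by ``$a\wedge b\ne z$''. If the blocks are infinitely many --- hence all singletons --- one gets $\alg D\cong\alg A_\infty$; if there are finitely many, say $k\ge2$, they are cyclically permuted by $g$, each closed under $\wedge$ and $g^k$, and minimality forces each such block to be a point, a chain, or the interval order of Figure~\ref{figone}, giving $\alg D\cong\alg A_k$, $\alg B_k^+$, $\alg B_k^-$, or $\alg C_k$. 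If the chain does not stabilise, $\alg D$ is bottomless, and a parallel analysis of the comparabilities among the $g^i(d)$ --- minimality again forbidding any ``extra'' meet value --- yields a bi-infinite chain $\alg B_1^+$ or $\alg B_1^-=\opposit{(\alg B_1^+)}$ (the sign recording the direction of the shift) when consecutive shifts are comparable, and $\alg C_1$ (the interval order of Figure~\ref{figone}) when they are not.

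\emph{Minimality and distinctness (steps (1), (2)).} For the finite $\alg A_k$, I would check directly that in any nontrivial member of $\var Q(\alg A_k)=\mathbf{ISP}(\alg A_k)$ the subalgebra generated by a non-$g$-fixed element is isomorphic to $\alg A_k$, which gives minimality. For the infinite generators I would prove that each embeds into an ultrapower of an arbitrary nontrivial member of its quasivariety, so the same argument applies; combined with the constructions promised in Section~\ref{seCtwoconstr} --- notably that the $k$-fold twisted multiple of a minimal-quasivariety generator ($k\ge2$) is again one --- this reduces (1) to the minimality of $\var Q(\alg A_\infty)$, $\var Q(\alg B_1^+)$, $\var Q(\alg B_1^-)$, and $\var Q(\alg C_1)$. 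For (2) I would exhibit separating (quasi-)identities: $x\approx g(x)$ isolates $\alg A_1$; $g^j(x)\approx x$ for a suitable $j$ separates the $\var Q(\alg A_k)$ from one another; $x\wedge g(x)\approx y\wedge g(y)$ holds precisely in the ``flat'' algebras $\alg A_\bullet$; $x\wedge g(x)\approx x$ holds in the $\alg B^+$-type but not in the $\alg B^-$-type or the $\alg C$-type; and the $g$-versus-$g^{-1}$ asymmetry made visible by $\opposit{(\cdot)}$ separates the plus-type from the minus-type within each block size.

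\emph{Main obstacle.} The hard part will be step (3): squeezing out of the bare minimality hypothesis the rigidity that kills every intermediate meet value and forces the $g$-orbit of $d$ into one of the four shapes (a flat antichain over a zero, a bi-infinite chain, an interval order, or a cyclic stack of these). A secondary difficulty is the ultrapower-embedding lemma behind the minimality of the infinite generators in step (1), where --- unlike for the finite $\alg A_k$ --- one cannot dispose of $\mathbf P_U$ for free.
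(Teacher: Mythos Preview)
The paper does not prove Theorem~\ref{djezmThm}; it is \emph{recalled} from Dziobiak, Je\v zek, and Mar\'oti~\cite{DJM} as motivation, with no argument given. So there is no ``paper's own proof'' to compare against. Your proposal is therefore an independent attempt at a result the present paper takes as known.

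That said, your reduction step is essentially what the paper later proves in full generality as Theorem~\ref{t:eq}: a minimal quasivariety other than the $\alg\FF$-trivial one is generated by its one-generated free algebra, and that algebra is characterised by the property that every nonzero element generates an isomorphic copy. So once you invoke Theorem~\ref{t:eq}, your step~(1) (minimality of each $\var Q(\alg D)$) reduces to checking condition~\eqref{t:eqd} of that theorem for each $\alg D$ on the list, which is straightforward and does not need ultrapowers; and the distinctness in step~(2) follows from \eqref{t:eqe} once you know the listed algebras are pairwise non-isomorphic.

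The genuine work, as you correctly identify, is step~(3): showing that a nontrivial $1$-generated $\algcikl\infty$-semilattice satisfying \eqref{t:eqd} must be one of the listed algebras. Your case split (on whether $\bigwedge_{|i|\le n}g^i(d)$ stabilises, then on the block structure) is the right shape, but the sketch leaves the core difficulty untouched: in the bottomless case you assert that ``minimality forbids any extra meet value'' and that this forces the orbit into a chain or the interval order of Figure~\ref{figone}, but you give no mechanism for this. Concretely, you need to show that if $d$ and $g(d)$ are incomparable then the meets $g^i(d)\wedge g^j(d)$ are exactly the interval lattice --- ruling out, say, a free semilattice on the orbit, or any partial collapse. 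This is where \cite{DJM} does real work, and your plan does not yet contain the idea that handles it.
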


\section{More about $\alg\FF$-semilattices}\label{secmorabout}
Let us agree that  $\alg\FF$ always denotes an abelian group, and $\slat\FF$ 
stands for the variety of $\alg\FF$-semilattices. For $\alg A\in\slat\FF $, if $(A;\wedge)$ has a zero element (in other words, a least element), then it is unique and we  denote it by $o$. As rule, none of the formulas $A\setminus\set o$ and $a\neq o$  implies that $\alg A$ has a zero. (If $\alg A$ has no zero, then $a\neq o$ means no condition on $a$ and  $A\setminus\set o =A$.)
A \emph{$1$-generated}  
(or \emph{cyclic}) $\alg\FF$-semilattice is a $\alg\FF$-semilattice generated by a single element. 
The following two lemmas were stated for $\alg\FF=\algcikl\infty$ in  W.\ Dziobiak,  J.\ Je\v zek, and M.\ Mar\'oti~\cite{DJM}; their proofs are presented for the reader's convenience.

\begin{lemma}\label{l:f1} Assume that $\alg\FF$ is an abelian group and $t$ is a unary $\alg\FF$-semilattice term. Then the following assertions hold.
\begin{enumeratei}
\item\label{l:f1a} The group $\alg\FF$ has a finite nonempty subset $H$ such $\slat\FF $ satisfies the identity 
$ t(x)\approx \bigwedge \set{h(x):h\in H}$.
\item\label{l:f1b} For every $\alg A\in\slat\FF$,   $t_{\alg A}\colon \alg A\to\alg A$ is an endomorphism. Hence, for every unary $\slat\FF $-term $s$, $\slat\FF $ satisfies the identity $s(t(x))\approx t(s(x))$.
\item\label{l:f1c} If $\alg A\in \slat\FF $ has a zero element $o$, then $t(o)=o$ in $\alg A$.
\item\label{l:f1d} If $\alg\FF$ is finite, then every finitely generated $\alg\FF$-semilattice is finite and has a zero element.
\end{enumeratei}
\end{lemma}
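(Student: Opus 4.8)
The plan is to establish \eqref{l:f1a} by induction on the structure of the unary term $t$ and then to deduce \eqref{l:f1b}--\eqref{l:f1d} from it. For \eqref{l:f1a}: if $t=x$, put $H=\set\id$ and use \eqref{Fsldefb}. If $t=g(s)$ and $s$ already comes with a finite nonempty set $H_s$ such that $s(x)\approx\bigwedge\set{h(x):h\in H_s}$ holds in $\slat\FF$, then \eqref{Fsldefd} lets $g$ distribute over this finite meet while \eqref{Fsldefc} rewrites each $g(h(x))$ as $(g\cdot h)(x)$, so $H:=\set{g\cdot h:h\in H_s}$ works (and has the same cardinality as $H_s$, since $h\mapsto g\cdot h$ is a bijection). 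If $t=s_1\wedge s_2$ with corresponding finite nonempty sets $H_1,H_2$, then $H:=H_1\cup H_2$ works by \eqref{Fsldefa}. Note that idempotence of $\wedge$ is exactly what makes the meet over a \emph{set} $H$ meaningful, and that $H$ stays finite and nonempty in each case.

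Given \eqref{l:f1a}, write $t_{\alg A}(a)=\bigwedge\set{h(a):h\in H}$ for $\alg A\in\slat\FF$. Compatibility of $t_{\alg A}$ with $\wedge$ is immediate from \eqref{Fsldefd} together with the semilattice laws \eqref{Fsldefa}; compatibility with each $g\in\FF$ follows by using \eqref{Fsldefc} to turn $h(g(a))$ into $(h\cdot g)(a)=(g\cdot h)(a)$ --- here, and essentially only here, the commutativity of $\alg\FF$ is used --- and then \eqref{Fsldefd} to pull $g$ back out of the meet. Thus $t_{\alg A}$ is an endomorphism of $\alg A$, which is the first assertion of \eqref{l:f1b}. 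Since an endomorphism commutes with every term operation, applying this to an arbitrary unary term $s$ gives $t_{\alg A}(s_{\alg A}(x))=s_{\alg A}(t_{\alg A}(x))$, i.e.\ $\slat\FF$ satisfies $s(t(x))\approx t(s(x))$. For \eqref{l:f1c}, each $h\in\FF$ acts as an \emph{automorphism} of $\langle A;\wedge\rangle$, hence fixes the least element whenever one exists, so $h(o)=o$ and therefore $t_{\alg A}(o)=\bigwedge\set{h(o):h\in H}=o$.

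For \eqref{l:f1d}, first extend \eqref{l:f1a} by the same induction to $n$-ary terms: every $\alg\FF$-semilattice term $t(x_1,\dots,x_n)$ satisfies, in $\slat\FF$, an identity $t(x_1,\dots,x_n)\approx\bigwedge\set{h(x_i):(h,i)\in P}$ for some finite nonempty $P\subseteq\FF\times\set{1,\dots,n}$. Consequently, if $\alg A\in\slat\FF$ is generated by $a_1,\dots,a_n$, then every element of $\alg A$ is the meet of a nonempty subset of the finite set $\set{h(a_i):h\in\FF,\ 1\le i\le n}$; as $\alg\FF$ is finite, only finitely many such meets occur, so $\alg A$ is finite. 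Being a finite meet-semilattice, $\alg A$ then has a least element, namely the meet of all its elements, which is a zero. I do not expect a genuine obstacle anywhere; the only points requiring care are the silent, repeated appeals to the commutativity of $\alg\FF$ in \eqref{l:f1b} and keeping track that the sets $H$ (resp.\ $P$) never become empty.
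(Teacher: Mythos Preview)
Your proof is correct and complete. The paper orders the first two parts the other way around: it establishes \eqref{l:f1b} first, by observing that all basic operations pairwise commute (namely $g(x\wedge y)=g(x)\wedge g(y)$, $(a\wedge b)\wedge(c\wedge d)=(a\wedge c)\wedge(b\wedge d)$, and $g(h(x))=h(g(x))$ by commutativity of $\alg\FF$), so that every term operation is automatically an endomorphism---a general principle for which it also cites \'A.~Szendrei~\cite{SzA}; then it says \eqref{l:f1a} follows from \eqref{l:f1b} by induction on the length of $t$. Your route---prove \eqref{l:f1a} directly by structural induction on $t$, then read off \eqref{l:f1b} from the explicit normal form $t(x)\approx\bigwedge_{h\in H}h(x)$---reaches the same conclusion and is entirely self-contained, whereas the paper's is terser but leans on an outside reference. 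For \eqref{l:f1d} the paper simply asserts that it follows from \eqref{l:f1a}; your explicit extension to $n$-ary terms and the counting argument fill in what the paper leaves to the reader.
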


\begin{proof} Since any two basic operations commute,  \eqref{l:f1b} is clear; it also follows from \'A.~Szendrei~\cite[Proposition 1.1]{SzA}. 
This implies \eqref{l:f1a} by  induction on the length of $t$. Assume that $o\in\alg A\in\slat\FF $. Since $o$ is a fixed point of every automorphism of  $\langle A;\wedge\rangle$ and $o\wedge o=o$, \eqref{l:f1c} and \eqref{l:f1d} follow from \eqref{l:f1a}. 
\end{proof}

\begin{lemma}\label{l:f3} Assume $\alg A\in\slat\FF$ is generated by an element $a\in A$. Let $s$, $s_1$ and $s_2$ be unary $\slat\FF $-terms. Then 
\begin{enumeratei}
\item\label{l:f3a} if $s_1(a)=s_2(a)$, then the identity $s_1(x)\approx s_2(x)$ holds in $\var V(\alg A)$; 
\item\label{l:f3b} if $s(a)=o$, 
the zero element  of $\alg A$, then the identity $s(x)\wedge y\approx s(x)$ holds in $\var V(\alg A)$. 
\end{enumeratei}
\end{lemma}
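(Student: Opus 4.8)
The plan is to exploit the fact that $\alg A$ is $1$-generated, so that every element of $A$ is of the form $u(a)$ for some unary $\slat\FF$-term $u$, together with the observation from Lemma~\ref{l:f1}\eqref{l:f1b} that all term operations of a $\alg\FF$-semilattice commute with one another and are endomorphisms.

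\emph{Part \eqref{l:f3a}.} Suppose $s_1(a)=s_2(a)$ in $\alg A$. I would show the identity $s_1(x)\approx s_2(x)$ holds in $\alg A$ first, and then note that it therefore holds in the variety $\var V(\alg A)$ it generates (since satisfaction of identities is preserved under $\mathsf H$, $\mathsf S$, $\mathsf P$). To see it holds in $\alg A$, take an arbitrary $b\in A$; since $a$ generates $\alg A$, there is a unary term $u$ with $b=u_{\alg A}(a)$. Then, using that $u_{\alg A}$ is an endomorphism (equivalently, that $s_i$ and $u$ commute as term operations),
\[
s_1(b)=s_1(u(a))=u(s_1(a))=u(s_2(a))=s_2(u(a))=s_2(b).
\]
Thus $s_1(x)\approx s_2(x)$ holds in $\alg A$, hence in $\var V(\alg A)$.

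\emph{Part \eqref{l:f3b}.} Now suppose $s(a)=o$, the zero of $\langle A;\wedge\rangle$. The same kind of computation shows $s_{\alg A}$ is constantly $o$: for any $b=u_{\alg A}(a)\in A$ we get $s(b)=s(u(a))=u(s(a))=u(o)=o$ by Lemma~\ref{l:f1}\eqref{l:f1c}. Hence in $\alg A$ the term $s(x)\wedge y$ evaluates to $o\wedge c=o=s(x)$ for all arguments, so the identity $s(x)\wedge y\approx s(x)$ holds in $\alg A$ and therefore in $\var V(\alg A)$.

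There is essentially no serious obstacle here; the argument is a routine application of $1$-generation plus commutativity of term operations, with Lemma~\ref{l:f1}\eqref{l:f1c} supplying the one extra fact (that terms fix the zero) needed for \eqref{l:f3b}. The only point to be mildly careful about is the logical step from ``holds in $\alg A$'' to ``holds in $\var V(\alg A)$,'' which is immediate because $\var V(\alg A)$ is generated by $\alg A$ and identities persist to $\mathsf{HSP}(\alg A)$; one should also make sure the case where $\alg A$ has no zero simply does not arise in \eqref{l:f3b} since the hypothesis $s(a)=o$ presupposes a zero.
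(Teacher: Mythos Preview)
Your proof is correct and follows essentially the same approach as the paper's: both verify the identity in $\alg A$ by writing an arbitrary element as $u(a)$ and commuting $s_i$ past $u$ via Lemma~\ref{l:f1}\eqref{l:f1b}, invoking Lemma~\ref{l:f1}\eqref{l:f1c} for the zero in part~\eqref{l:f3b}, and then passing to $\var V(\alg A)$.
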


\begin{proof} 
Assume that $s_1(a)=s_2(a)$, and let $b\in A$. Since $\alg A$ is generated by $a$,  $b$ is of the form $t(a)$ for some unary term $t$. Using Lemma~\ref{l:f1}\eqref{l:f1b}, $s_1(b)=s_1(t(a))=t(s_1(a))=t(s_2(a))=s_2(t(a))=s_2(b)$. Hence, \eqref{l:f3a} holds.

To prove \eqref{l:f3b}, assume that $s(a)=o$ in $\alg A$, and let $b,c\in A$. Pick a unary term $t$ such that $b=t(a)$. It follows from Lemma~\ref{l:f1}\eqref{l:f1b}-\eqref{l:f1c} that $s(b)=s(t(a))=t(s(a))=t(o)=o$. Hence, $s(b)\wedge c=s(b)$. Consequently, the identity $s(x)\wedge y \approx s(x)$ holds in $\alg A$, and also in $\var V(\alg A)$. 
\end{proof}

The concept of a semilattice over $\alg\FF$ is analogous to that of a vector space over a field. The  following two statements indicate that this analogy is quite strong in the ``one-dimensional case''. 
If $\alg A$ is a $\alg\FF$-semilattice and $b\in A$, then $\genpsub{\alg A}b$ or simply $\gensub b$ denotes the \emph{subalgebra generated} by $b$. For $B=\gensub b$,  we can also write $\alg B=\gensub b$ if we consider $\gensub b$ an algebra rather than a subset. 

\begin{corollary}\label{l:sz} Let $\alg A$ be a $\alg\FF$-semilattice generated by an element $a$. Then  $\alg A$ is a  free algebra in $\var V(\alg A)$,  freely generated by~$a$.
\end{corollary}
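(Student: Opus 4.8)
The plan is to show that the natural surjection from the absolutely free (term) algebra onto $\alg A$, evaluated at the generator, is actually injective modulo the equational theory of $\var V(\alg A)$, which is exactly what ``free in $\var V(\alg A)$ on one generator'' means. Concretely, let $\alg T$ be the $\var V(\alg A)$-free algebra on one free generator $x$; since $\alg A$ is generated by $a$, there is a unique surjective homomorphism $\phi\colon\alg T\to\alg A$ with $\phi(x)=a$. I must prove $\phi$ is injective; equivalently, for unary $\slat\FF$-terms $s_1,s_2$ (and here every element of $\alg T$ is represented by a unary term, since $\alg T$ is $1$-generated), if $s_1(a)=s_2(a)$ in $\alg A$ then $s_1(x)=s_2(x)$ in $\alg T$.

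The key step is precisely Lemma~\ref{l:f3}\eqref{l:f3a}: if $s_1(a)=s_2(a)$ in $\alg A$, then the identity $s_1(x)\approx s_2(x)$ holds in $\var V(\alg A)$, hence in $\alg T$, so $s_1(x)=s_2(x)$ in $\alg T$. Thus $\phi$ is injective, and being also surjective it is an isomorphism $\alg T\cong\alg A$ carrying the free generator $x$ to $a$. Therefore $\alg A$ is free in $\var V(\alg A)$ on the single generator $a$. One should first record the (trivial but needed) observation that in a $1$-generated $\alg\FF$-semilattice every element is of the form $t(a)$ for a \emph{unary} term $t$: this is because the only non-unary basic operation is $\wedge$, and a meet of terms in the single variable $x$ is again a unary term in $x$ (using associativity/commutativity/idempotence of $\wedge$ only to normalize, which is not even required for the argument). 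This guarantees that ``$\phi$ injective'' reduces to the unary-term statement handled by Lemma~\ref{l:f3}.

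The only mild subtlety — and the place I would be most careful — is the bookkeeping of what ``$1$-generated free algebra of $\var V(\alg A)$'' means: one must be sure that the free generator maps to $a$ and that surjectivity of $\phi$ is automatic (it is, since the image is a subalgebra containing $a$, hence all of $\alg A$). There is no real obstacle here; the entire content is already packaged in Lemma~\ref{l:f3}\eqref{l:f3a}, so the proof is essentially a one-line deduction once the setup is stated. I would write it as: take $\phi\colon\freealg{\var V(\alg A)}1\to\alg A$ sending the free generator to $a$; it is onto; Lemma~\ref{l:f3}\eqref{l:f3a} shows it is one-to-one on the generating element's orbit under unary terms, which is everything; done.
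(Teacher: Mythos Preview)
Your proof is correct and hinges on the same key ingredient as the paper's, namely Lemma~\ref{l:f3}\eqref{l:f3a}. The only difference is packaging: the paper verifies the universal property directly (for any $b\in\alg B\in\var V(\alg A)$ the rule $s(a)\mapsto s(b)$ is well-defined by Lemma~\ref{l:f3}\eqref{l:f3a} and is a homomorphism), whereas you instead compare $\alg A$ with the abstract free algebra $\freealg{\var V(\alg A)}1$ and use the lemma to show the canonical surjection is injective---these are the two standard, interchangeable ways of establishing freeness.
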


\begin{proof} Consider an arbitrary $b\in \alg B\in\var V(\alg A)$. Define a map $\phi\colon \alg A\to \alg B$ by $s(a)\mapsto s(b)$, where $s$ denotes a unary term. Then $\phi$ is a well-defined map by Lemma~\ref{l:f3}\eqref{l:f3a}. It is a homomorphism since 
$\phi(s(a))=s(b)=s(\phi(a))$ and $\phi(s(a)\wedge r(a))=s(b)\wedge r(b)=\phi(s(a))\wedge \phi(r(a))$. Clearly, $\phi$ extends the $\set{a}\to\set{b}$ map. Hence, $\set{a}$ freely generates $\alg A$.
\end{proof}

\begin{lemma}\label{lnewie}
Let $\alg A$ be a \emph{$1$-generated} $\alg \FF$-semilattice, and assume that $\set{d}$ is a subalgebra of $\alg A$. Then $d$ is the zero element of $\alg A$.
\end{lemma}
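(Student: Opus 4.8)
The plan is to prove that $d\leq x$ for every $x\in A$ in the meet-semilattice order (so that $u\leq v$ means $u\meet v=u$); this is exactly the assertion that $d$ is the zero element of $\alg A$. First I would unpack the hypothesis: since $\set d$ is closed under every unary operation $g\in\FF$, we have $g(d)=d$ for all $g\in\FF$, so $d$ is a common fixed point of all the semilattice automorphisms $x\mapsto g(x)$. Fix a generator $a$ of $\alg A$. Because $d\in A=\gensub a$, there is a unary $\alg\FF$-semilattice term $t$ with $d=t(a)$, and by Lemma~\ref{l:f1}\eqref{l:f1a} there is a finite nonempty $H\subseteq\FF$ with $d=t(a)=\bigwedge\set{h(a):h\in H}$.

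The key step is to \emph{normalize} $H$ so that it contains $\id$. Pick any $h_0\in H$ and apply the automorphism $h_0^{-1}$ to the equality just obtained. Using $h_0^{-1}(d)=d$, the fact that $h_0^{-1}$ is a semilattice automorphism (axiom~\eqref{Fsldefd}), and axiom~\eqref{Fsldefc} in the form $h_0^{-1}\bigl(h(a)\bigr)=(h_0^{-1}\cdot h)(a)$, we get $d=\bigwedge\set{h'(a):h'\in h_0^{-1}H}$, and now $\id\in h_0^{-1}H$. Hence $a=\id(a)$ is one of the meetands, so $d\meet a=d$, i.e.\ $d\leq a$.

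Finally I would propagate this single inequality to all of $A$. Given an arbitrary $x\in A$, write $x=t'(a)$ for a unary term $t'$ and, again by Lemma~\ref{l:f1}\eqref{l:f1a}, $x=\bigwedge\set{h(a):h\in H'}$ for a finite nonempty $H'\subseteq\FF$. Every automorphism preserves $\leq$, so from $d\leq a$ together with $h(d)=d$ we get $d=h(d)\leq h(a)$ for each $h\in H'$, whence $d\leq\bigwedge\set{h(a):h\in H'}=x$. Thus $d\meet x=d$ for every $x\in A$, so $d$ is the least element of $\langle A;\meet\rangle$, as claimed.

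I do not expect a genuine obstacle. The only delicate point is that one cannot assume $\id\in H$ from the outset, so the normalization step is essential rather than cosmetic — and it is precisely there that the hypothesis ``$\set d$ is a subalgebra'' (as opposed to merely $d\in A$) is used, through the equality $h_0^{-1}(d)=d$.
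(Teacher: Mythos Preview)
Your proof is correct and follows essentially the same approach as the paper: both arguments use Lemma~\ref{l:f1}\eqref{l:f1a} together with the hypothesis $h^{-1}(d)=d$ to obtain $d\leq a$, and then propagate this inequality to all of $A$. The only cosmetic difference is in the propagation step: the paper observes that the order filter $\filter d$ is a subalgebra containing $a$ and hence equals $A$, whereas you expand an arbitrary $x$ as a meet $\bigwedge_{h\in H'}h(a)$ and bound each meetand---but these are two phrasings of the same fact.
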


\begin{proof} Let  $\alg A=\genpsub{ \alg A}a$. By Lemma~\ref{l:f1}\eqref{l:f1a}, $d\leq h(a)$ for some $h\in \alg \FF$. Using that $\set d$ is a subalgebra, we obtain that $d=h^{-1}(d)\leq h^{-1}(h(a))=a$. Hence $a$ belongs to the order filter $\filter d$ generated by $d$. Since this filter is clearly a subalgebra and contains $a$, we conclude that $A=\filter d$. Thus $d=o$.
\end{proof}

Since there is 
only  one way, the ``$\alg\FF$-trivial way'', to expand the two-element meet-semilattice $\langle\set{0,1},\leq\rangle$ into a $\alg\FF$-semilattice, we can speak of \emph{the} two-element $\alg\FF$-semilattice. Although a $1$-generated $\alg\FF$-semilattice $\alg B\in \var Q(\alg A)$ is  free in $\var V(\alg B)$ and in $\var Q(\alg B)$ by Corollary~\ref{l:sz}, it is not necessarily free in $\var Q(\alg A)$.

\begin{lemma}\label{l:k} Let $\alg A$ be a $1$-generated $\alg\FF$-semilattice that is isomorphic to each of its nontrivial $1$-generated subalgebras.  
Then $\var Q(\alg A)$ does not contain the two-element $\alg\FF$-semilattice. Moreover, if $\alg A=\genpsub{\alg A}a$ and $s$ and $t$ are unary terms with $s(a)\neq t(a)$, then the quasivariety $\var Q(\alg A)$ satisfies the quasi-identity $s(x)\approx t(x)\then x\approx x\wedge y$.
\end{lemma}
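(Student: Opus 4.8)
The plan is to derive both conclusions from a single quasi-identity, and to get that quasi-identity from the hypothesis that every nontrivial $1$-generated subalgebra of $\alg A$ is isomorphic to $\alg A$. So let me first set up the second (stronger) assertion, and then read off the first one as a corollary.

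First I would write $\alg A = \genpsub{\alg A}a$ and take unary terms $s,t$ with $s(a)\neq t(a)$. Consider the element $b := s(a)\wedge t(a)\in A$ and the subalgebra $\alg B := \gensub b$. Since $\alg B$ is a $1$-generated subalgebra of $\alg A$, the hypothesis says $\alg B$ is either trivial or isomorphic to $\alg A$. In the trivial case, Lemma~\ref{lnewie} forces $b = o$, the zero of $\alg A$; then $s(x)\meet t(x)\approx s(x\wedge$-term argument via Lemma~\ref{l:f3}\eqref{l:f3b} applied to the unary term $u(x):= s(x)\wedge t(x)$ (which satisfies $u(a)=o$) gives the identity $u(x)\wedge y\approx u(x)$ in $\var V(\alg A)\subseteq\var Q(\alg A)$ — but I also need to rule this trivial case out under the quasi-identity's hypothesis $s(x)\approx t(x)$, which is easy: if $s(a)=t(a)$ then $s(a)\neq t(a)$ fails, contradiction, so actually the hypothesis $s(a)\neq t(a)$ is just a standing assumption and we argue inside any algebra of $\var Q(\alg A)$. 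Let me restructure: the real point is that $\gensub b$ nontrivial and $\cong\alg A$ means $\gensub b$ is itself $1$-generated and free in $\var V(\gensub b)=\var V(\alg A)$ by Corollary~\ref{l:sz}; since $b\leq s(a)$ and $b\leq t(a)$, writing $b = p(a)$ for the generator $p$ of $\gensub b$ and $s(a)=q_1(b)$, $t(a)=q_2(b)$ for unary terms (using Lemma~\ref{l:f1}\eqref{l:f1a}, every element is a meet of images of the generator, hence every element above... ) — the key is that $s(a)$ and $t(a)$ both lie in the order filter $\filter b$, which is a subalgebra; but $\gensub b\subseteq\filter b$ and if $\gensub b$ is $1$-generated and isomorphic to the whole of $\alg A$, and $\filter b\supseteq\gensub b$ is also generated inside... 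The cleanest route: $b\leq s(a)$ gives $s(a)\in\filter b = \gensub b$ (filters generated by a single point are subalgebras, and contain the generator, so equal the generated subalgebra here), similarly $t(a)\in\gensub b$, hence $a' := $ a generator of $\gensub b$ generates a copy of $\alg A$ containing $s(a),t(a)$; transporting the distinguishing terms back, $s$ and $t$ already separate points in that copy, contradicting nothing yet — I need the quasi-identity.

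Here is the actual mechanism I expect to use. Suppose, for contradiction, that $\var Q(\alg A)$ does not satisfy $s(x)\approx t(x)\then x\approx x\wedge y$. Then there is $\alg D\in\var Q(\alg A)$ and $c,e\in D$ with $s(c)=t(c)$ but $c\neq c\wedge e$, i.e. $c\not\leq e$. Restrict to the $1$-generated subalgebra $\gensub c\leq\alg D$; it still lies in $\var Q(\alg A)$, still satisfies $s(c)=t(c)$, and still $c$ is not below the element $c\wedge e$ — wait, $c\wedge e$ need not be in $\gensub c$; replace $e$ by $c\wedge e\in$? no. Better: the failure of $c\approx c\wedge y$ means $c$ is not the least element; in a $1$-generated algebra $\gensub c$, by Lemma~\ref{lnewie} (contrapositive) $\set c$ is not a subalgebra of $\gensub c$, so $h(c)\neq c$ for some $h\in\alg\FF$, equivalently $g(c)\wedge c < c$ type phenomenon — and $s(c)=t(c)$ with $s(x)\neq t(x)$-failure. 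Then $\gensub c$ is a nontrivial $1$-generated algebra in $\var Q(\alg A)$ satisfying $s\approx t$ as a consequence of $s(c)=t(c)$ (Lemma~\ref{l:f3}\eqref{l:f3a}), hence $\var V(\gensub c)$ satisfies $s(x)\approx t(x)$. But I want to compare $\gensub c$ with $\alg A$; the hypothesis only controls subalgebras \emph{of $\alg A$}. So the device must be: inside $\alg A$ itself produce a $1$-generated subalgebra on which $s$ and $t$ agree. Take any nontrivial $1$-generated subalgebra $\gensub{c_0}$ of $\alg A$ with $c_0 := s(a)\wedge t(a)$ if that is $\neq o$: on $\gensub{c_0}$, do $s$ and $t$ agree? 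Not obviously.

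\textbf{The main obstacle}, as this scratch-work shows, is pinpointing the right element of $\alg A$ whose generated subalgebra both (a) is nontrivial, hence a copy of $\alg A$ by hypothesis, and (b) lies in the $\meet$-closure above $s(a)$ and $t(a)$ so that the isomorphism forces $s$ and $t$ to \emph{agree} there, yielding — via freeness (Corollary~\ref{l:sz}) of that copy and hence of $\alg A$ — that $s(x)\approx t(x)$ holds in $\var V(\alg A)$, contradicting $s(a)\neq t(a)$ unless no such nontrivial subalgebra exists, i.e. unless $s(a)\wedge t(a)=o$ and more. I expect the correct choice to be $\gensub{s(a)}$ together with $\gensub{t(a)}$: each is $1$-generated, and if either is nontrivial it is isomorphic to $\alg A$, so it is free on its generator; mapping $a\mapsto s(a)$ (resp.\ $a\mapsto t(a)$) shows $s$ and $t$ behave on $\gensub{s(a)}$ exactly as on $\alg A$, so the only way to avoid $s=t$ on all of $\var Q(\alg A)$ is that $\gensub{s(a)}$ and $\gensub{t(a)}$ meet in a \emph{trivial} subalgebra, which by Lemma~\ref{lnewie} equals $\set o$, so that $s(a)\wedge t(a)=o$; then Lemma~\ref{l:f3}\eqref{l:f3b} with the unary term $x\mapsto s(x)\wedge t(x)$ gives $s(x)\wedge t(x)\wedge y\approx s(x)\wedge t(x)$ in $\var V(\alg A)$, and finally, for an algebra in $\var Q(\alg A)$ in which $s(x)\approx t(x)$ holds, this specializes to $s(x)\wedge y\approx s(x)$, i.e.\ $x\approx x\wedge y$ after renaming — yielding the quasi-identity and, with $y$ ranging over a putative two-element algebra's top element, the non-membership of the two-element $\alg\FF$-semilattice (in which $x\approx x\wedge y$ plainly fails for $x$ the top, $y$ the bottom). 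I would write the final proof in exactly this order: reduce to $s(a)\wedge t(a)=o$ via the freeness/isomorphism argument on $\gensub{s(a)}$, apply Lemma~\ref{l:f3}\eqref{l:f3b}, then deduce both the quasi-identity and the exclusion of the two-element algebra.
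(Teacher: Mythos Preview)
Your final plan has two genuine gaps.

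\textbf{First gap: the reduction to $s(a)\wedge t(a)=o$ is false.} Take $\alg A=\alg B_1^+=\langle\mathbb Z;\min,\cikl\infty\rangle$ with $g(i)=i+1$ and generator $a=0$; every element generates the whole algebra, so the hypothesis of the lemma holds. With $s(x)=x$ and $t(x)=g(x)$ we have $s(a)=0\neq 1=t(a)$, yet $s(a)\wedge t(a)=\min(0,1)=0=a$, which generates the entire nontrivial, zero-free algebra. So neither ``$\gensub{s(a)}$ and $\gensub{t(a)}$ meet in a trivial subalgebra'' nor ``$s(a)\wedge t(a)=o$'' is true. The sentence ``the only way to avoid $s=t$ \ldots\ is that \ldots'' is an unjustified leap.

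\textbf{Second gap: the ``renaming'' at the end is invalid.} Even if you had the identity $s(x)\wedge t(x)\wedge y\approx s(x)\wedge t(x)$ in $\var V(\alg A)$, combining it with $s(c)=t(c)$ yields only $s(c)\wedge d=s(c)$, i.e.\ $s(c)$ is least. The target quasi-identity has conclusion $x\approx x\wedge y$, asserting that $c$ itself is least; you cannot rename $s(x)$ to $x$.

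You actually touched the correct mechanism and then abandoned it. A quasi-identity holds in $\var Q(\alg A)$ iff it holds in $\alg A$, so there is no need to pass to an arbitrary $\alg D$: just take $b\in A$ with $s(b)=t(b)$ and show $b\leq c$ for every $c\in A$. If $b\neq o$ then $\gensub b$ is nontrivial by Lemma~\ref{lnewie}, hence isomorphic to $\alg A$ by hypothesis; but Lemma~\ref{l:f3}\eqref{l:f3a} gives that $\gensub b$ satisfies $s(x)\approx t(x)$, so $\alg A$ does too, contradicting $s(a)\neq t(a)$. Therefore $b=o$ and the quasi-identity holds in $\alg A$. The right witness element is $b$ itself, not $s(a)\wedge t(a)$. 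The exclusion of the two-element $\alg\FF$-semilattice then follows exactly as you indicated at the end.
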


\begin{proof}  We can assume that $\alg A=\gensub a$ is nontrivial. Let $s$ and $t$ be unary terms such that $s(a)\neq t(a)$. There are such terms since $|A|\neq 1$. We claim that 
\begin{equation}\label{lseiW}
\text{$s(b)\neq t(b)$ for all $b\in \alg A\setminus\set 0$.}
\end{equation} 
To obtain a contradiction, suppose that $b\in  \alg A\setminus\set 0$ such that $s(b)=t(b)$. 
Then, by Lemma~\ref{l:f3}\eqref{l:f3a}, the subalgebra $\gensub b$ satisfies the identity $s(x)\approx t(x)$. Moreover, $\gensub b$  is a nontrivial subalgebra by Lemma~\ref{lnewie}. Thus it is  isomorphic to $\alg A$ by the assumption.  Therefore, the identity $s(x)\approx t(x)$ also holds in $\alg A$, which contradicts $s(a)\neq t(a)$. This proves \eqref{lseiW}. 
Next, it follows from \eqref{lseiW} that $\alg A$ satisfies the quasi-identity $s(x)\approx t(x)\then x\approx x\wedge y$. So does $\var Q(\alg A)$.  The evaluation $(x,y)=(1,0)$ shows that this quasi-identity fails in the two-element $\alg\FF$-semilattice, whence this $\alg\FF$-semilattice does not belong to $\var Q(\alg A)$.
\end{proof}

The largest congruence and the least congruence of an algebra $\alg A$ are denoted by $\nabla=\nabla_{\alg A}$ and $\Delta=\Delta_{\alg A}$, respectively.

\begin{lemma}\label{l:kong} Let $\alg A$ be a nontrivial $1$-generated $\alg\FF$-semilattice such that every $b\in  A\setminus \set o$ generates a subalgebra isomorphic to $\alg A$. 
Assume that $\rho$ is a congruence of $\alg A$ and $\rho\notin\set{\nabla_{\alg A}, \Delta_{\alg A}}$. Then the quotient algebra $\alg A/\rho$ does not belong to the quasivariety $\var Q(\alg A)$.
\end{lemma}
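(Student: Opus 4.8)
The plan is to combine Lemma~\ref{l:k} with the standard fact that a minimal-like ``all nontrivial $1$-generated subalgebras are isomorphic'' hypothesis forces every proper nontrivial quotient to collapse information in a way incompatible with the quasi-identity produced there. Concretely, suppose for contradiction that $\alg A/\rho\in\var Q(\alg A)$ for some congruence $\rho$ with $\Delta_{\alg A}\subsetneq\rho\subsetneq\nabla_{\alg A}$. Write $\alg A=\genpsub{\alg A}a$, so that $\alg A/\rho$ is generated by $a/\rho$. First I would observe that $\rho\neq\Delta$ gives two unary terms $s,t$ with $s(a)\neq t(a)$ in $\alg A$ but $s(a)\equiv t(a)\pmod\rho$, i.e.\ $s(a/\rho)=t(a/\rho)$ in $\alg A/\rho$. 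By Lemma~\ref{l:k}, $\var Q(\alg A)$ satisfies the quasi-identity $s(x)\approx t(x)\then x\approx x\wedge y$; applying it in $\alg A/\rho$ to $x=a/\rho$ and an arbitrary $y=b/\rho$, we get $a/\rho=a/\rho\wedge b/\rho$ for all $b\in A$, i.e.\ $a/\rho$ is the least element of $\alg A/\rho$.

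The next step is to derive a contradiction from $\rho\neq\nabla$. Since $a/\rho$ is the least element and $a$ generates $\alg A$, every element of $\alg A/\rho$ has the form $u(a/\rho)=u(a)/\rho$ for a unary term $u$, and by Lemma~\ref{l:f1}\eqref{l:f1a} each such $u(a)$ is a meet of elements $h(a)$, $h\in\alg\FF$. Because $a/\rho\le h(a)/\rho$ for every $h$ (least element), applying $h^{-1}$ and using that $\alg\FF$ acts by semilattice automorphisms gives $h^{-1}(a)/\rho\le a/\rho$, hence $h^{-1}(a)/\rho=a/\rho$; as $h$ ranges over $\alg\FF$ this says $h(a)\equiv a\pmod\rho$ for all $h\in\alg\FF$. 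Then $u(a)\equiv a\pmod\rho$ for every unary term $u$, so $\alg A/\rho$ is a one-element algebra, i.e.\ $\rho=\nabla_{\alg A}$ — the desired contradiction.

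I expect the main obstacle to be the bookkeeping in the second paragraph: making rigorous the passage from ``$a/\rho$ is the least element'' to ``$\rho$ identifies $a$ with each $h(a)$'', and thence with each $u(a)$. The key point is that applying an automorphism $h^{-1}$ to the $\rho$-class inequality $a/\rho\le h(a)/\rho$ is legitimate because $h^{-1}$ is a unary operation of the algebra and hence compatible with the congruence $\rho$; together with $h^{-1}(a/\rho)\ge a/\rho$ (again from leastness) one gets equality, and Lemma~\ref{l:f1}\eqref{l:f1a} then propagates this to all unary terms. One should also double-check the degenerate situation where $\alg A$ itself might be trivial, but that is excluded by hypothesis, and where $\rho$ could equal $\Delta$ after all — but that too is excluded, and it is precisely the failure $\rho\neq\Delta$ that supplies the terms $s,t$ feeding Lemma~\ref{l:k}.
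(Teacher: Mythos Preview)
Your proof is correct and follows essentially the same strategy as the paper: pick unary terms $s,t$ with $s(a)\neq t(a)$ but $s(a)/\rho=t(a)/\rho$, invoke the quasi-identity of Lemma~\ref{l:k}, and conclude that $\alg A/\rho$ would have to be trivial, contradicting $\rho\neq\nabla_{\alg A}$. The paper shortens your second paragraph by first applying Lemma~\ref{l:f3}\eqref{l:f3a} to deduce that the identity $s(x)\approx t(x)$ holds throughout $\alg A/\rho$ (since $a/\rho$ generates it), so the quasi-identity immediately forces $x\approx x\wedge y$ for all $x,y$, i.e.\ $|\alg A/\rho|=1$, without the hands-on automorphism argument.
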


\begin{proof}
Since $\rho\neq\Delta$, we can pick a pair $(b,c)\in \rho$ such that $b\neq c$. Pick an element $a\in A$ that generates $\alg A$. Then there exist unary terms $s$ and $t$ such that $b=s(a)$ and $c=t(a)$. 
Since $\alg A/\rho$ is generated by $a/\rho$ and $s(a/\rho)=s(a)/\rho=t(a)/\rho=t(a/\rho)$, we obtain from Lemma~\ref{l:f3}\eqref{l:f3a} that the identity $s(x)\approx t(x)$ holds in $\alg A/\rho$. But $\alg A/\rho$ is nontrivial since $\rho\neq\nabla$, whence the quasi-identity $s(x)\approx t(x)\then x\approx x\wedge y$ fails in 
$\alg A/\rho$. On the other hand, this quasi-identity holds in $\alg A$ and also in $\var Q(\alg A)$   by Lemma~\ref{l:k}. 
Thus $\alg A/\rho\notin \var Q(\alg A)$.
\end{proof}

\section{The $1$-generated free algebras of minimal quasivarieties}\label{secMain}
The minimal  quasivarieties of $\alg \FF$-semilattices are described by the following theorem; except for the obvious $\alg\FF$-trivial case, it suffices to deal with $1$-generated nontrivial $\alg \FF$-semilattices.

\begin{theorem}\label{t:eq} \textup{(A)} The variety $\var S$ of all $\alg\FF$-trivial $\alg\FF$-semilattices is a minimal quasivariety, and it is generated by the two-element $\alg\FF$-semilattice. Except for $\var S$, each minimal quasivariety of $\alg\FF$-semilat\-tices is generated by a $1$-gener\-ated $\alg\FF$-semilattice. Furthermore, $\var S$ is the only minimal quasivariety  of $\alg\FF$-semi\-lattices whose $1$-generated free algebra is one-element.
 
\textup{(B)} Let $\alg A$ be a nontrivial $1$-generated $\alg \FF$-semilattice, and let $a\in A$ be a fixed element that generates $\alg A$. Then the following four conditions are equivalent.
\begin{enumeratei}
\item\label{t:eqa} $\var Q(\alg A)$ is a minimal quasivariety.
\item\label{t:eqb} For each $b\in A$, if the subalgebra $B$ generated by $b$ is not a singleton, then there is an isomorphism $\phi\colon\alg A\to \alg B$ such that $\phi(a)=b$.
\item\label{t:eqc} $\alg A$ is isomorphic to each if its nontrivial $1$-generated subalgebras.
\item\label{t:eqd} Each nonzero element of $\alg A$ generates a subalgebra isomorphic to $\alg A$.
\end{enumeratei}
Moreover, if some $($equivalently, each$)$ of the conditions \eqref{t:eqa}, \eqref{t:eqb}, \eqref{t:eqc}, and \eqref{t:eqd} holds, then 
\begin{enumeratei}
\setcounter{enumi}{4}
\item\label{t:eqe} all nontrivial $1$-generated algebras of $\var Q(\alg A)$ are $($isomorphic to$)$ the free algebra $\freealg{\var Q(\alg A)}1$ of $\var Q(\alg A)$.
\end{enumeratei}
\end{theorem}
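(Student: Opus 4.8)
The plan is to prove Theorem~\ref{t:eq} in two stages, treating part (A) first and then the cycle of equivalences in part (B), and finally deducing \eqref{t:eqe}.

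For part (A), I would argue as follows. The two-element $\alg\FF$-semilattice $\alg T$ is $\alg\FF$-trivial, and since $\langle\{0,1\};\wedge\rangle$ generates the variety of semilattices, $\var V(\alg T)$ is exactly $\var S$; moreover $\var Q(\alg T)=\var V(\alg T)$ because the semilattice quasivariety is a variety, so $\var S$ is generated by $\alg T$ and, being ``equivalent'' to the variety of semilattices, has no proper nontrivial subquasivariety --- hence it is minimal. Now let $\var R$ be any minimal quasivariety other than $\var S$. Pick a nontrivial $\alg A\in\var R$ and a nontrivial $1$-generated subalgebra $\alg B=\gensub b\le\alg A$ (take $b$ any non-zero element; if $\alg A$ is $\alg\FF$-trivial and two-element we are in $\var S$, excluded, so such a $\alg B$ exists --- here one uses that $\var R\ne\var S$ to rule out the degenerate case). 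Then $\var Q(\alg B)\subseteq\var R$ is nontrivial, so by minimality $\var R=\var Q(\alg B)$, proving $\var R$ is $1$-generated. For the last sentence of (A): if $\freealg{\var R}1$ is one-element, then every $1$-generated algebra in $\var R$ is trivial; combined with the fact (just proved) that a non-$\var S$ minimal quasivariety is $\var Q(\alg B)$ for a nontrivial $1$-generated $\alg B$ --- a contradiction --- we get $\var R=\var S$; conversely $\freealg{\var S}1$ is the two-element algebra's free object, which is one-element since in $\var S$ every generator $x$ satisfies $g(x)\approx x$ and $x\approx x\wedge x$, so $\freealg{\var S}1$ is a singleton.

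For part (B), I would close the loop $\eqref{t:eqb}\Rightarrow\eqref{t:eqc}\Rightarrow\eqref{t:eqd}\Rightarrow\eqref{t:eqa}\Rightarrow\eqref{t:eqb}$. The implications $\eqref{t:eqb}\Rightarrow\eqref{t:eqc}$ and $\eqref{t:eqc}\Rightarrow\eqref{t:eqd}$ are essentially tautological given Lemma~\ref{lnewie} (a nonzero element generates a non-singleton subalgebra, since a singleton subalgebra must be $\{o\}$). The substantive implication is $\eqref{t:eqd}\Rightarrow\eqref{t:eqa}$: assume each nonzero element of $\alg A$ generates a copy of $\alg A$, and let $\var Q'$ be a nontrivial subquasivariety of $\var Q(\alg A)$; I must show $\var Q'=\var Q(\alg A)$, equivalently $\alg A\in\var Q'$. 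Pick a nontrivial $\alg C\in\var Q'$. The key is to locate inside (a subdirect-style analysis of) $\alg C$ a $1$-generated subalgebra isomorphic to $\alg A$, or to use that $\alg C$ validates every quasi-identity $\var Q'$ does and that $\alg A$, being $1$-generated and free in $\var V(\alg A)$ by Corollary~\ref{l:sz}, is determined by the quasi-identities $s(x)\approx t(x)\Rightarrow x\approx x\wedge y$ for all pairs $s(a)\ne t(a)$ (Lemma~\ref{l:k}) together with the identities $s(x)\approx t(x)$ for $s(a)=t(a)$ (Lemma~\ref{l:f3}\eqref{l:f3a}). Concretely: in $\alg C$ pick an element $c$ not equal to the zero (if $\alg C$ has one); then $\gensub c\in\var Q(\alg A)$ is a nontrivial $1$-generated algebra, so it is a homomorphic image of $\alg A$ (via Corollary~\ref{l:sz}), i.e.\ $\gensub c\cong\alg A/\rho$ for some congruence $\rho$; by Lemma~\ref{l:kong}, if $\rho\notin\{\nabla,\Delta\}$ then $\alg A/\rho\notin\var Q(\alg A)$, a contradiction, and $\rho=\nabla$ is impossible since $\gensub c$ is nontrivial, so $\rho=\Delta$ and $\gensub c\cong\alg A$; hence $\alg A\in\var Q'$, as desired. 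Finally $\eqref{t:eqa}\Rightarrow\eqref{t:eqb}$: assuming $\var Q(\alg A)$ minimal, for any $b$ with $\alg B=\gensub b$ nontrivial we have $\var Q(\alg B)=\var Q(\alg A)$ by minimality, so $\alg A\in\var Q(\alg B)=\var Q(\alg A)$ and $\alg B\in\var Q(\alg A)$; since $\alg A$ is $1$-generated, $\alg A\in\var Q(\alg B)$ forces (again by the Corollary~\ref{l:sz} / Lemma~\ref{l:kong} argument applied with the roles of $\alg A,\alg B$ as needed) that $\alg A$ is a quotient of $\alg B$ and $\alg B$ a quotient of $\alg A$, and a short argument with finitely-many congruences or with Lemma~\ref{l:kong} pins the generating map to an isomorphism $\phi$ with $\phi(a)=b$.

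For \eqref{t:eqe}: assume the equivalent conditions hold, and let $\alg D$ be a nontrivial $1$-generated algebra in $\var Q(\alg A)$. By Corollary~\ref{l:sz}, $\alg D$ is free in $\var V(\alg D)$; since $\alg D\in\var Q(\alg A)$, $\alg D$ is a $1$-generated member of $\var Q(\alg A)$, hence (by Corollary~\ref{l:sz} applied to $\alg A$) a homomorphic image of $\alg A$, say $\alg D\cong\alg A/\rho$, and the argument of Lemma~\ref{l:kong} forces $\rho=\Delta$, so $\alg D\cong\alg A$; as $\alg A$ itself is the $1$-generated free algebra $\freealg{\var Q(\alg A)}1$ (it is $1$-generated, lies in $\var Q(\alg A)$, and maps onto every $1$-generated member, hence is free on one generator there), we conclude $\alg D\cong\freealg{\var Q(\alg A)}1$. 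I expect the main obstacle to be the implication $\eqref{t:eqd}\Rightarrow\eqref{t:eqa}$ --- specifically, justifying cleanly that a nontrivial algebra in a subquasivariety $\var Q'$ must contain a $1$-generated subalgebra isomorphic to $\alg A$, since one must handle the case where $\alg C$ has a zero element and make sure the chosen generator $c$ is nonzero and that $\gensub c$ is genuinely nontrivial (this is exactly where Lemma~\ref{lnewie}, Lemma~\ref{l:kong}, and Corollary~\ref{l:sz} combine), and to check that $\alg C$ is not $\alg\FF$-trivial-two-element in disguise, which would land in $\var S$ rather than contradict anything; all the other steps are bookkeeping with the already-established lemmas.
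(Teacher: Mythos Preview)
Your overall strategy is the same as the paper's and uses the right lemmas, but two of your implications have genuine gaps.

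For $\eqref{t:eqd}\Rightarrow\eqref{t:eqa}$, the step ``pick $c\neq o$ in $\alg C$; then $\gensub c$ is nontrivial'' is not justified: $\alg C$ is an \emph{arbitrary} nontrivial member of $\var Q'$, not a $1$-generated one, so Lemma~\ref{lnewie} does not apply, and a nonzero $c$ may well generate a singleton (e.g.\ if $c$ is fixed by every $g\in\FF$). You correctly flag this as the obstacle, but you point to the wrong tool. What you need is Lemma~\ref{l:k}: if every $c\in C$ were a fixed point of every $g$, then $\alg C$ would be $\alg\FF$-trivial, hence would contain the two-element $\alg\FF$-semilattice as a subalgebra, placing it in $\var Q'\subseteq\var Q(\alg A)$ and contradicting Lemma~\ref{l:k}. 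The paper sidesteps the issue more cleanly by taking $\alg B=\freealg{\var Q'}1$ instead of an arbitrary $\alg C$: if $\alg B$ were one-element then $g(x)\approx x$ would hold throughout $\var Q'$, again forcing the two-element $\alg\FF$-semilattice into $\var Q(\alg A)$ contrary to Lemma~\ref{l:k}.

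For $\eqref{t:eqa}\Rightarrow\eqref{t:eqb}$, your argument is circular as written: you invoke Lemma~\ref{l:kong}, whose hypothesis is precisely condition~\eqref{t:eqd}, which at this point in your cycle you have not derived from~\eqref{t:eqa}. The ``finitely-many congruences'' alternative is also unfounded, since nothing guarantees finiteness. The paper's argument is direct and avoids all of this: define $\phi\colon r(a)\mapsto r(b)$ (a well-defined surjective homomorphism by Corollary~\ref{l:sz}), and for injectivity note that $r(b)=s(b)$ gives $r(x)\approx s(x)$ in $\var V(\alg B)$ by Lemma~\ref{l:f3}\eqref{l:f3a}, hence in $\var Q(\alg B)=\var Q(\alg A)$ by minimality, hence $r(a)=s(a)$. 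If you prefer your mutual-quotient viewpoint, the clean way to finish is: Corollary~\ref{l:sz} also gives $\psi\colon\alg B\to\alg A$ with $\psi(b)=a$, and then $\psi\circ\phi$ fixes the generator $a$, hence is the identity on $\alg A$, so $\phi$ is injective.
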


The set of minimal quasivarieties of $\alg\FF$-semilattices will often be denoted by $\minqvar{\alg\FF}$. (Since the quasi-identities in the language of $\alg\FF$-semilattices form a set, so do the minimal quasivarieties.)

\begin{proof}[Proof of Theorem~\ref{t:eq}] Since $\alg\FF$-trivial $\alg\FF$-semilattices are essentially (that is, up to term equivalence) semilattices, it belongs to the folklore that $\var S$ is generated by the two-element $\alg\FF$-semilattice. Since $\freealg{\var S}1$ is one-element, it does not generate $\var S$. Assume that $\var U\in \minqvar{\alg\FF}$ such that $\var U$ is not $\alg\FF$-trivial. Then there are an algebra $\alg B\in \var U$, an element $b\in B$, and a group element $g\in \FF$ such that $g(b)\neq b$. Hence the subalgebra generated by $b$ has at least two elements, which implies that $\freealg{\var U}1$ not a singleton. Therefore,  by the minimality of $\var U$, 
$\freealg{\var U}1$ generates $\var U$. This proves part (A).

Next, we deal with part (B). Clearly, \eqref{t:eqb}  implies \eqref{t:eqc}. It follows from  
Lemma~\ref{lnewie} that \eqref{t:eqc} and \eqref{t:eqd} are equivalent.

To prove that \eqref{t:eqa} implies \eqref{t:eqb}, assume that \eqref{t:eqa} holds. 
Let $b\in A$, and assume that the subalgebra $\gensub b$ is not a singleton. Define a map $\phi\colon \alg A\to \alg B$ by the rule $r(a)\mapsto r(b)$, where $r$ ranges over the set of unary terms. Letting $r$ be the identity map, we obtain that $\phi(a)=b$. 
 We conclude from Corollary~\ref{l:sz} that $\phi$ is a homomorphism, and it is clearly surjective. In order to prove that $\phi$ is injective,  assume that 
$r$ and $s$ are unary terms such that 
$\phi(r(a))=\phi(s(a))$, that is, $r(b)=s(b)$. By Lemma~\ref{l:f3}\eqref{l:f3a}, the identity $r(x)\approx s(x)$ holds in $\var V(\alg B)$, whence it also holds in $\var Q(\alg B)$.  Since $\alg B\in\var Q(\alg A)$ and $\alg B$ is nontrivial, the minimality of  $\var Q(\alg A)$ implies $\var Q(\alg B)=\var Q(\alg A)$. Thus the identity $r(x)\approx s(x)$ holds in $\alg A$, and we obtain that $r(a)=s(a)$. Hence, $\phi$ is injective, and it is an isomorphism. Therefore, \eqref{t:eqa} implies \eqref{t:eqb}.

Next, to show that \eqref{t:eqc} implies \eqref{t:eqe}, assume that \eqref{t:eqc} holds. Let $\alg B\in\var Q(\alg A)$ be a nontrivial $1$-generated algebra generated by $b\in B$.  By Corollary~\ref{l:sz}, there exists a (unique) surjective homomorphism $\phi\colon \alg A\to \alg B$ such that $\phi(a)=b$. Suppose, to derive a contradiction, that $\phi$ is not an isomorphism. Then $\phi$ is not injective, whence $\ker\phi\neq\Delta$. Since $\alg B$ is nontrivial, $\ker\phi\neq\nabla$. Thus Lemma~\ref{l:kong} (together with Lemma~\ref{lnewie}) applies, and we obtain that $\alg B\cong \alg A/\ker\phi\notin \var Q(\alg A)$. This contradicts the assumption on $\alg B$, and we conclude that \eqref{t:eqc} implies \eqref{t:eqe}.

Finally, to show that  \eqref{t:eqc} implies \eqref{t:eqa}, assume that \eqref{t:eqc} holds.  Let $\var K$ be a nontrivial subquasivariety of  $\var Q(\alg A)$. 
Let $\alg B$ denote the  free algebra $\freealg{\var K}b$. For the sake of contradiction, suppose that $\alg B$ is one-element. Then $g(b)=b$ for every $g\in\alg\FF$, and we conclude that the identity $g(x)\approx x$ holds in $\var K$. 
Therefore, the two-element $\alg\FF$-semilattice belongs to $\var K\subseteq \var Q(\alg A)$, which contradicts Lemma~\ref{l:k}. Thus $\alg B\in \var K$ is a nontrivial $1$-generated algebra in $\var Q(\alg A)$. 
Since we already know that \eqref{t:eqc} implies \eqref{t:eqe}, we obtain from \eqref{t:eqe}  that $\alg A\cong \alg B\in \var K$. Thus $\alg A\in\var K$, yielding that   $\var Q(\alg A)\subseteq \var K$. This means that $\var Q(\alg A)\in \minqvar{\alg\FF}$. Consequently, \eqref{t:eqc} implies \eqref{t:eqa}.
\end{proof}

\section{Two constructs}\label{seCtwoconstr}
The following construct is due to M.~Mar\'oti~\cite{MM}. Let $\sgrp$ be a subgroup (that is, a nonempty subuniverse) of $\alg\FF$. The \emph{Mar\'oti semilattice over $\alg\FF$} is
\begin{align*}
\mmalg= \langle \mmset;\cap, \FF\rangle\text{, where }
\mmset =\emptyset\cup \set{g\sgrp: g\in \FF},
\end{align*}
 $\cap$ is the usual intersection, and $f(g\sgrp)$ for $f,g\in\FF$ is defined as $fg\sgrp$.  Note that $\mmalg$ consists of atoms and a zero; the atoms are the (left) cosets of $\sgrp$ while the emptyset is $o$. 
Clearly, $\mmalg$ is a $\alg\FF$-semilattice. 
Note that $\alg A_\infty$ in 
Theorem~\ref{djezmThm} is (isomorphic to) ${\qalgeb{\set1}{\algcikl \infty}}$.
The importance of this $\alg\FF$-semilattice is explained by the following statement.

\begin{proposition}\label{soZRc}
\textup{(A)}
For every subgroup $\sgrp$ of an abelian group $\alg\FF$, $\mmalg$ 
generates a minimal quasivariety $\var K$. If $\sgrp=\FF$, then  $\mmalg$ is the two-element $\alg\FF$-semi\-lattice. Otherwise, $\mmalg\cong \freealg{\var K}1$.

\textup{(B)} Let $\sgrp_1$ and $\sgrp_2$ be subgroups of $\alg\FF$. Then the algebras 
$\qalgeb{\sgrp_1}{\alg\FF}$ and $\qalgeb{\sgrp_2}{\alg\FF}$ generate the same minimal quasivarieties of $\alg\FF$-semilattices if and only if $\sgrp_1=\sgrp_2$.  
\end{proposition}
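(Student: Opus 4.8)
The plan is to apply Theorem~\ref{t:eq} to reduce everything to a statement about the internal structure of $\mmalg$, and then to verify that structure by a short direct computation. For part (A), first I would note that $\mmalg$ is $1$-generated: the coset $\sgrp=\id\cdot\sgrp$ generates the whole algebra, because every other coset $g\sgrp$ is $g(\sgrp)$, and the zero $\emptyset = \sgrp\cap g\sgrp$ for any $g\notin\sgrp$ (and if $\sgrp=\FF$ there is no zero and nothing to check). So I would fix $a=\sgrp$ as the generator. Then I would verify condition \eqref{t:eqd} of Theorem~\ref{t:eq}: every nonzero element of $\mmalg$ is a coset $g\sgrp$, and the subalgebra it generates is $\{fg\sgrp : f\in\FF\}\cup\{\emptyset\}$, which equals all of $\mmset$; the map $x\mapsto g(x)$ is an automorphism of $\mmalg$ (it is one of the basic operations, hence invertible with inverse $g^{-1}$) carrying $a=\sgrp$ to $g\sgrp$. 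Thus each nonzero element generates a subalgebra isomorphic to $\mmalg$ via a map fixing nothing but realizing the generator correctly, so \eqref{t:eqd} holds and Theorem~\ref{t:eq}(B) gives that $\var Q(\mmalg)$ is minimal. The claim $\mmalg\cong\freealg{\var K}1$ when $\sgrp\neq\FF$ is then immediate from part~\eqref{t:eqe} of Theorem~\ref{t:eq}, since $\mmalg$ is a nontrivial $1$-generated member of $\var K=\var Q(\mmalg)$; and when $\sgrp=\FF$, $\mmset=\{\emptyset,\FF\}$ is literally the two-element $\alg\FF$-semilattice with both elements fixed by $\FF$.

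For part (B), the ``if'' direction is trivial. For ``only if'', suppose $\var Q(\qalgeb{\sgrp_1}{\alg\FF})=\var Q(\qalgeb{\sgrp_2}{\alg\FF})$. If one of $\sgrp_1,\sgrp_2$ equals $\FF$, the corresponding algebra is the two-element $\alg\FF$-trivial semilattice, so the common quasivariety is $\var S$; but $\qalgeb{\sgrp_i}{\alg\FF}$ is $\alg\FF$-trivial only when every coset equals every other, i.e.\ only when $\sgrp_i=\FF$, forcing $\sgrp_1=\sgrp_2=\FF$. So assume both are proper. Then by (A) both algebras are the $1$-generated free algebra of the common minimal quasivariety, hence $\qalgeb{\sgrp_1}{\alg\FF}\cong\qalgeb{\sgrp_2}{\alg\FF}$ as $\alg\FF$-semilattices. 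Now I would recover $\sgrp_i$ from the algebra: the generator $a_i=\sgrp_i$ is an atom, and $\sgrp_i=\{h\in\FF : h(a_i)=a_i\}$ is precisely the stabilizer of $a_i$ under the $\FF$-action. An isomorphism $\psi\colon\qalgeb{\sgrp_1}{\alg\FF}\to\qalgeb{\sgrp_2}{\alg\FF}$ commutes with the $\FF$-action, so it carries the stabilizer of any atom to the stabilizer of its image; since all atoms have conjugate (here, equal, as $\alg\FF$ is abelian) stabilizers, $\psi$ maps $a_1$ to some coset $g\sgrp_2$ whose stabilizer is again $\sgrp_2$, giving $\sgrp_1 = \sgrp_2$.

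The one point requiring a little care — the main (mild) obstacle — is the injectivity/structure check underlying \eqref{t:eqd}: I must confirm that the subalgebra generated by a nonzero $b=g\sgrp$ is all of $\mmset$ rather than something smaller, and that the automorphism $x\mapsto g(x)$ is genuinely a bijection of $\mmset$ onto itself. Both are clear once one observes that $\{fg\sgrp:f\in\FF\}=\{f'\sgrp:f'\in\FF\}$ since $f\mapsto fg$ permutes $\FF$, and that $g$ has inverse $g^{-1}$ as a unary operation by axiom~\eqref{Fsldefc}. A secondary point is making sure the case $\sgrp=\FF$ is correctly separated out everywhere, since then $\mmalg$ has no zero element and the ``$\cong\freealg{\var K}1$'' clause is replaced by the explicit two-element identification; I would state this split explicitly at the start of each part. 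Everything else is bookkeeping with Theorem~\ref{t:eq} and Lemma~\ref{lnewie}.
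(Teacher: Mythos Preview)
Your proposal is correct and follows essentially the same route as the paper: for (A), show that every nonzero element of $\mmalg$ generates the whole algebra and invoke Theorem~\ref{t:eq}(B); for (B), use part~(A) to get an isomorphism between the two Mar\'oti semilattices and recover each $\sgrp_i$ as the stabilizer of an atom, which is exactly the paper's chain of equivalences written in stabilizer language. One small slip: when $\sgrp=\FF$ the element $\emptyset$ is still the zero of $\mmalg$ (so ``there is no zero'' is inaccurate) and no single element generates the two-element algebra---but you correctly set this case aside and handle it via Theorem~\ref{t:eq}(A), just as the paper does.
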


\begin{proof} Since the case of $\sgrp=\FF$ is trivial, we assume that $\sgrp$ is a proper subgroup. Then there is an $f\in \FF\setminus \sgrp$. Since $o=\emptyset = 1\sgrp\cap f\sgrp$ and, for any $g,h\in\FF$, $h\sgrp=(hg^{-1})(g\sgrp)$, $\mmalg$ is generated by each of its nonzero elements. Thus part (A) follows from Theorem~\ref{t:eq}(B).

To prove part (B), we can assume that both subgroups in question are proper. 
The ``if'' part is obvious. To prove the converse implication, assume that $\qalgeb{\sgrp_1}{\alg\FF}$ and $\qalgeb{\sgrp_2}{\alg\FF}$ generate the same quasivariety $\var K$.  Then $\qalgeb{\sgrp_1}{\alg\FF}\cong \qalgeb{\sgrp_2}{\alg\FF}$ since both are isomorphic to $\freealg{\var K}1$ by part (A). Hence, there is an isomorphism $\phi\colon \qalgeb{\sgrp_1}{\alg\FF}\to \qalgeb{\sgrp_2}{\alg\FF}$. Since $\sgrp_1=1\sgrp_1$ is an atom, so is its $\phi$-image. Thus there is an $f\in\FF$ such that $\phi(\sgrp_1)=f\sgrp_2$. Let $x$ denote an arbitrary element of $\FF$. Using that  $\phi$ is an isomorphism,  we obtain that
\begin{align*}
x\in H_1 &\iff xH_1=H_1 \iff \phi(xH_1)=\phi(H_1) \iff x\phi(H_1)=fH_2\cr
&\iff  xfH_2=fH_2\iff xff^{-1}\in H_2\iff x\in H_2,
\end{align*}
which means that $H_1=H_2$.
\end{proof}
Let $\Sub{\alg \FF}$ denote the set of all subgroups (that is, nonempty subuniverses) of $\alg \FF$. The following statement clearly follows from 
Proposition~\ref{soZRc}.

\begin{corollary} \textup{(A)} For every abelian group $\alg\FF$, 
there are at least $|\Sub{\alg \FF}|$ many minimal quasivarieties of $\alg\FF$-semilattices. 

\textup{(B)} For each cardinal $\kappa$, there exists an abelian group $\alg\FF$ such that there are at least $\kappa$ minimal quasivarieties of $\alg\FF$-semilattices.
\end{corollary}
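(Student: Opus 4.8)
The plan is to read off both parts directly from Proposition~\ref{soZRc}, so there is very little left to do. For part (A), I would consider the assignment $\sgrp\mapsto \var Q(\mmalg)$ that sends a subgroup $\sgrp$ of $\alg\FF$ to the minimal quasivariety generated by the corresponding Mar\'oti semilattice $\mmalg$. This assignment takes values in $\minqvar{\alg\FF}$ by Proposition~\ref{soZRc}(A), and it is injective by Proposition~\ref{soZRc}(B), since $\qalgeb{\sgrp_1}{\alg\FF}$ and $\qalgeb{\sgrp_2}{\alg\FF}$ generate the same minimal quasivariety only when $\sgrp_1=\sgrp_2$. The existence of an injection $\Sub{\alg\FF}\to\minqvar{\alg\FF}$ is precisely the cardinality statement $|\minqvar{\alg\FF}|\geq|\Sub{\alg\FF}|$ of part (A). (Here ``at least'' is the most one can claim in general, since, as the cyclic case in Theorem~\ref{djezmThm} shows, $\minqvar{\alg\FF}$ may contain minimal quasivarieties not generated by any Mar\'oti semilattice.)

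For part (B), by part (A) it suffices to exhibit, for an arbitrary cardinal $\kappa$, an abelian group $\alg\FF$ with $|\Sub{\alg\FF}|\geq\kappa$. I would take $\alg\FF$ to be the direct sum of $\kappa$ copies of the two-element group, say $\alg\FF=\bigoplus_{i\in I}\algcikl 2$ with $|I|=\kappa$. Each subset $J\subseteq I$ determines a subgroup $\bigoplus_{i\in J}\algcikl 2$ of $\alg\FF$ (namely the set of elements whose support is contained in $J$), and distinct subsets yield distinct subgroups, so $|\Sub{\alg\FF}|\geq 2^{\kappa}\geq\kappa$. Applying part (A) to this $\alg\FF$ then gives at least $\kappa$ minimal quasivarieties of $\alg\FF$-semilattices.

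I do not expect a genuine obstacle here: the whole weight of the argument lies in Proposition~\ref{soZRc}, and what is left is only the elementary observation that a sufficiently large elementary abelian $2$-group has many subgroups. (A $\mathbb Q$-vector space of large dimension, a Pr\"ufer $p$-group, or a free abelian group of large rank would serve equally well; the elementary $2$-group merely makes the subgroup count the most transparent.) The single point worth noting is that part (A), and hence the chosen group in part (B), need not be finite, so the reasoning must avoid any appeal to finiteness of $\alg\FF$ --- and it does.
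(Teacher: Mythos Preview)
Your proposal is correct and follows essentially the same approach as the paper: the paper simply states that the corollary ``clearly follows from Proposition~\ref{soZRc}'' without spelling out any details. Your argument for part~(A) is exactly the intended one, and for part~(B) you supply a concrete choice of $\alg\FF$ (the elementary abelian $2$-group of rank $\kappa$) where the paper leaves this implicit; any of the alternatives you mention would work equally well.
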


The next construction generalizes the $n$-fold twisted multiple construct, see \eqref{nfoldtwm}. 
Let $K$ be a subgroup of $\alg\FF$, and choose a system $\rep$ of representatives of the (left) cosets of $K$. That is,  $\rep\subseteq \FF$ such that $|\rep\cap gK|=1$ holds for all $g\in \FF$. Usually (but not in Lemma~\ref{MicoDieAE}), we  assume that $T\cap K=\set 1$.
(We will prove that, up to isomorphism, our construct does not depend on the choice of $\rep$.)
Assume that $\alg U$ is a $\alg K$-semilattice. We define a $\alg\FF$-semilattice $\gtwm K{\FF}U$ (up to isomorphism) as follows. 
\begin{equation}\label{nfCPlTwm}
\begin{aligned}
\gtwm K{\FF}U =\bigl\langle \set o&\cup 
(U\times \rep) ;
\wedge, \FF
\bigr\rangle,\qquad\text{where, for }g\in\FF, 
\cr   
g(\pair ut)&= \pair{gtf^{-1}(u)}{f} \text{ if }f \in \rep\cap gtK,
\cr
 g(o)&=o,\quad (u,t)\wedge o=o\wedge o=o, \text{ and } \cr
\pair {u_1}{t_1}\wedge \pair {u_2}{t_2}&=\begin{cases}\pair{u_1\wedge u_2}{t_1}& \text{if }t_1=t_2,\cr
o&\text{if }t_1\neq t_2
  \end{cases} \text{.}
\end{aligned}
\end{equation}

\begin{lemma}\label{MicoDieAE} 
The isomorphism class  
of the algebra $\gtwm K{\FF}U$ does not depend on the choice of $\rep$. Furthermore, 
$\gtwm K{\FF}U$ is a $\alg\FF$-semilattice.  
\end{lemma}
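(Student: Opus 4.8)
The plan is to verify the two assertions of Lemma~\ref{MicoDieAE} in the natural order: first that $\gtwm K{\FF}U$ is a $\alg\FF$-semilattice for any fixed choice of $\rep$, and then that changing $\rep$ produces an isomorphic algebra. For the first part I would simply run through the defining identities \eqref{Fsldefa}--\eqref{Fsldefd}. Identity \eqref{Fsldefa} is immediate: the operation $\wedge$ on $\set o\cup(U\times\rep)$ is the disjoint union of the semilattice operations on the ``fibres'' $U\times\set t$ (each a copy of the semilattice reduct of $\alg U$) with a new bottom $o$ glued below everything, and such a ``glued sum'' of semilattices is again a semilattice. Identity \eqref{Fsldefb} holds because for $g=1$ and $t\in\rep$ we have $f\in\rep\cap 1\cdot tK=\rep\cap tK=\set t$, so $f=t$ and $1(\pair ut)=\pair{1\cdot t\cdot t^{-1}(u)}{t}=\pair{1(u)}t=\pair ut$, using $\id_{\alg U}=\id$. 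For \eqref{Fsldefc}, fix $f_1,f_2\in\FF$ and $\pair ut$; let $s\in\rep\cap f_2tK$ and $r\in\rep\cap f_1sK$, so that $f_2(\pair ut)=\pair{f_2ts^{-1}(u)}{s}$ and then $f_1(f_2(\pair ut))=\pair{f_1sr^{-1}(f_2ts^{-1}(u))}{r}=\pair{f_1sr^{-1}f_2ts^{-1}(u)}{r}$; on the other hand $s\in f_2tK$ gives $f_1sK=f_1f_2tK$, so $r\in\rep\cap(f_1f_2)tK$ as well, and $(f_1f_2)(\pair ut)=\pair{(f_1f_2)tr^{-1}(u)}{r}$. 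One must check that the two ``coefficients'' $f_1sr^{-1}f_2ts^{-1}$ and $f_1f_2tr^{-1}$ coincide \emph{as elements of $\alg K$ acting on $u$}: indeed both lie in $K$ (since $s,r$ represent the appropriate cosets), and in the abelian group $\alg\FF$ they are equal, $f_1sr^{-1}f_2ts^{-1}=f_1f_2tsr^{-1}s^{-1}=f_1f_2tr^{-1}$; commutativity of $\alg\FF$ is exactly what is used here. For \eqref{Fsldefd} one checks $g(\pair{u_1}{t}\wedge\pair{u_2}{t})=g(\pair{u_1\wedge u_2}{t})$ equals $g(\pair{u_1}{t})\wedge g(\pair{u_2}{t})$ using that $u\mapsto gtf^{-1}(u)$ (with $f=f(g,t)$ depending only on $g$ and $t$, not on $u$) is an element of $\alg K$ hence a semilattice endomorphism of $\alg U$ by Lemma~\ref{l:f1}\eqref{l:f1b}, and that two elements in different fibres meet to $o$ before and after applying $g$ (note $g$ maps the fibre over $t$ bijectively onto the fibre over $f$, and distinct $t_1,t_2$ give distinct $f_1,f_2$). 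Also $g(o)=o$ and $o$ is absorbing by definition, so all the mixed cases involving $o$ are trivial.

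For the independence-of-$\rep$ statement, let $\rep$ and $\rep'$ be two systems of coset representatives for $K$ in $\alg\FF$. For each $t\in\rep$ let $t'$ denote the unique element of $\rep'\cap tK$, and write $t'=t\kappa_t$ with $\kappa_t\in K$; the map $t\mapsto t'$ is a bijection $\rep\to\rep'$. Define $\Phi\colon\gtwm K\FF U_{\rep}\to\gtwm K\FF U_{\rep'}$ by $\Phi(o)=o$ and $\Phi(\pair ut)=\pair{\kappa_t^{-1}(u)}{t'}$, using that $\kappa_t\in K$ acts on $U$. This is clearly a bijection (its inverse is built from $\kappa_t$), and it preserves $\wedge$ because $\kappa_t^{-1}$ is a semilattice endomorphism of $\alg U$, it sends the fibre over $t$ bijectively onto the fibre over $t'$, and $t_1=t_2\iff t_1'=t_2'$. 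The remaining task is to check that $\Phi$ commutes with the action of each $g\in\FF$, i.e.\ $\Phi(g^{\rep}(\pair ut))=g^{\rep'}(\Phi(\pair ut))$: on the left, with $f\in\rep\cap gtK$ we get $g^{\rep}(\pair ut)=\pair{gtf^{-1}(u)}{f}$, hence $\Phi(\cdots)=\pair{\kappa_f^{-1}gtf^{-1}(u)}{f'}$; on the right, $\Phi(\pair ut)=\pair{\kappa_t^{-1}(u)}{t'}$ and, since $t'\in tK$, we have $gt'K=gtK$, so the representative in $\rep'$ of $gt'K$ is exactly $f'$, giving $g^{\rep'}(\pair{\kappa_t^{-1}(u)}{t'})=\pair{gt'(f')^{-1}\kappa_t^{-1}(u)}{f'}$. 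Comparing the two fibre-$f'$ entries, one must verify $\kappa_f^{-1}gtf^{-1}=gt'(f')^{-1}\kappa_t^{-1}$ as elements of $\alg K$ acting on $u$; both sides are in $K$, and in the abelian group $\alg\FF$ one substitutes $t'=t\kappa_t$, $f'=f\kappa_f$ and cancels to see they agree. Again this identity is purely a computation in the commutative group $\alg\FF$.

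The only genuine subtlety, and the step I would treat most carefully, is bookkeeping the ``twisting coefficients'': in every case the element of $\FF$ that is fed to $\alg U$ must be shown to actually lie in $K$ (so that the formula makes sense at all), and then the verification that two such $\FF$-words are \emph{equal} rests essentially on the commutativity of $\alg\FF$ together with the defining relations $f\in gtK$, $r\in f_1sK$, etc. A clean way to organize this is to observe that for $g\in\FF$ and $t\in\rep$ the coefficient $c(g,t):=gtf^{-1}$ (where $f=f(g,t)$ is the representative of $gtK$) is by construction the unique element of $K$ with $g t = c(g,t)\, f$; phrased this way, the cocycle-type identities $c(f_1f_2,t)=c(f_1,s)\,c(f_2,t)$ (for \eqref{Fsldefc}) and the compatibility relation needed for $\Phi$ become one-line consequences of uniqueness and of $\alg\FF$ being abelian, which avoids error-prone manipulation of long products. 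With that observation in hand the proof is a short routine check of \eqref{Fsldefa}--\eqref{Fsldefd} and of the four properties (bijectivity, $\wedge$-preservation, fibre behaviour, $\FF$-equivariance) of $\Phi$.
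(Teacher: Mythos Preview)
Your proposal is correct and follows essentially the same approach as the paper: verify axioms \eqref{Fsldefa}--\eqref{Fsldefd} directly (the paper spells out only \eqref{Fsldefc} and \eqref{Fsldefd}, declaring the rest trivial), and then build the explicit isomorphism $\Phi(\pair ut)=\pair{\kappa_t^{-1}(u)}{t'}$, which is exactly the paper's map $\pair ut\mapsto\pair{t'^{-1}t(u)}{t'}$ in different notation. Your cocycle reformulation $c(g,t)=gtf^{-1}$ with the identity $c(f_1f_2,t)=c(f_1,s)\,c(f_2,t)$ is a tidy organizational device the paper does not use, but the underlying computations are identical.
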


\begin{proof} 
First, we prove that $\gtwm K{\FF}U$ is a $\alg\FF$-semilattice. 
Assume that $\pair ut\in U\times\rep$ and $g_1,g_2\in \FF$. Denote by $f_1$ and $f_2$ the unique element of $\rep\cap g_2tK$ and  $\rep\cap g_1f_1K$, respectively. Then 
$f_1K=g_2tK$ yields that $f_2\in g_1f_1K=g_1g_2tK$. Using the commutativity of $\alg\FF$ at $\csillegy$, we obtain that 
\begin{equation*}
\begin{aligned}
g_1\bigl(g_2(\pair ut)\bigr)= g_1(\pair{g_2tf_1^{-1}(u)}{f_1}) = 
\pair {g_1f_1f_2^{-1}(g_2tf_1^{-1}(u))} {f_2} \cr
= \pair{g_1f_1f_2^{-1}g_2tf_1^{-1}(u)}{f_2} \csillegy \pair{g_1g_2tf_2^{-1}(u)}{f_2}=(g_1g_2)(\pair ut)\text.
\end{aligned}
\end{equation*}
This implies that axiom \eqref{Fsldefc} holds. To prove the validity of \eqref{Fsldefd}, let $g\in \FF$, $u_1,u_2,\in U$ and $t,t_1,t_2\in \rep$ such that $t_1\neq t_2$. Let $f\in \rep\cap gtK$, $f_1\in \rep\cap gt_1K$, and  $f_2\in \rep\cap gt_2K$. Since $t_1$ and $t_2$ belong to distinct cosets of $K$, we obtain that $gt_1K\neq gt_2K$ and  $f_1\neq f_2$. Thus
\begin{align*} g(\pair{u_1}{t_1})\wedge g(\pair{u_2}{t_2}) &= \pair{gt_1f_1^{-1}(u_1)}{f_1}\wedge \pair{gt_2f_2^{-1}(u_2)}{f_2}\cr  &=o=g(o)=g\bigl(\pair{u_1}{t_1}\wedge \pair{u_2}{t_2}  \bigr)\text. 
\end{align*}
Since we also obtain that 
\begin{align*} g(\pair{u_1}{t})\wedge g(\pair{u_2}{t}) &= \pair{gtf^{-1}(u_1)}{f}\wedge \pair{gtf^{-1}(u_2)}{f}\cr
  & =  \pair{ gtf^{-1}(u_1)\wedge gtf^{-1}(u_2) }{f}  =\pair{ gtf^{-1}(u_1\wedge u_2)}{f}\cr  
&= g(\pair{u_1\wedge u_2}{t}) 
= g(\pair{u_1}{t} \wedge \pair{u_2}{t}  ), 
\end{align*}
axiom \eqref{Fsldefd} also holds. The rest of the axioms trivially hold, whence the algebra $\gtwm K{\FF}U$ is a $\alg\FF$-semilattice.  

Second, let $\alg B$ be the algebra $\gtwm K{\FF}U$ defined above with $T$. Let $T'$ be another system of representatives of the cosets of $\alg K$, and denote by $\alg B'$ the algebra $\gtwm K{\FF}U$ constructed with $T'$ instead of $T$. For $t\in T$, the unique element of $T'\cap tK$ is denoted by $t'$. 
We claim that the map
$\phi\colon \alg B\to \alg B'$, defined by
\begin{equation*}
o\mapsto o,\qquad \pair ut\mapsto \pair{t'^{-1}t(u)}{t'}
\end{equation*}
is an isomorphism. 
It is clearly a semilattice isomorphism since $t'^{-1}t$ induces an automorphism of $\langle U;\wedge\rangle$. Assume that $g\in \FF$, $\pair{u}{t_1}\in B$,  and $\pair{v}{t_1'}\in  B'$. Let $T\cap gt_1K=\set{t_2}$. Then $t_2 \in (gK)(t_1K)=(gK)(t_1'K)=gt_1'K $, which implies that $T'\cap gt_1'K=\set{t_2'}$. Hence, we obtain that 
\begin{align}
g_{\alg B}(\pair u{t_1})&= \pair{gt_1t_2^{-1}(u)}{t_2}\text{ in }\alg B\text{ and}\cr
g_{\alg B'}(\pair v{t_1'})&= \pair{gt_1't_2'^{-1}(v)}{t_2'}\text{ in }\alg B'\text.\label{lsZeW}
\end{align}
Observe that $t_1'^{-1}t_1\in K$. 
Using  \eqref{lsZeW} with $v=t_1'^{-1}t_1(u)$, we obtain that $\phi$ preserves the operation $g$ since%
\begin{align*}
g_{\alg B'}( \phi\pair u{t_1}) &= g_{\alg B'}(\pair {t_1'^{-1}t_1(u)}{t_1'})
= \pair{gt_1't_2'^{-1}(t_1'^{-1}t_1(u))}{t_2'}
\cr
&= \pair{gt_1't_2'^{-1}t_1'^{-1}t_1(u)}{t_2'}
=
 \pair{t_2'^{-1} t_2 g t_1  t_2^{-1}  (u)}{t_2'}  \cr 
& = 
\pair{t_2'^{-1} t_2 ( g t_1  t_2^{-1}  (u))}{t_2'} =  \phi\bigl( \pair{ g t_1  t_2^{-1}  (u)}{t_2}  \bigr)
\cr
&=\phi(g_{\alg B}( \pair u{t_1}  ) )\text. \qedhere
\end{align*}
\end{proof}

\begin{remark}  The $n$-fold twisted multiple construct is indeed a particular case of \eqref{nfCPlTwm}, up to term equivalence. To see this, assume that $\alg\FF_1$ and $\alg\FF_2$ are abelian groups and  $\phi\colon \alg\FF_2\to \alg\FF_1$ is a surjective homomorphism. Then each $\alg\FF_1$-semilattice $\alg B_1=\langle B;\wedge,\FF_1\rangle$ becomes a $\alg\FF_2$-semilattice $\alg B_2=\langle B;\wedge,\FF_2\rangle$ \emph{by change of groups}:  $\alg B_2=\langle B;\wedge,\FF_2\rangle$ and, for $g\in\FF_2$ and $b\in B$, we define $g(b)=(\phi(g))(b)$. Clearly, $\alg B_1$ and $\alg B_2$ are term equivalent. 
Now, let $n\in\mathbb N$, and let $\alg A=\langle A;\wedge,\cikl\infty\rangle$ be a $\alg{\cikl\infty}$-semilattice, where $\alg{\cikl\infty}$ is generated by $g$. Take the subgroup $K=\set{g^{nk}:k\in\mathbb Z}$, and let $\rep=\set{g^0,g^1,\ldots,g^{n-1}}$. Using the group isomorphism $\phi\colon \alg K\to \alg{\cikl\infty}$, defined by $\phi(g^{nk})=g^k$, $\alg A$ becomes a $\alg K$-semilattice $\alg A'$ by change of groups. We claim that $\twm{\alg A}n$ is isomorphic to $\gtwm K {{\alg{\cikl\infty}}}{A'}$; the easy proof is omitted since we will not use this fact.
\end{remark}

\begin{remark}\label{remfanshaped}
The Mar\'oti semilattice over $\alg\FF$ is also a particular case of \eqref{nfCPlTwm} since $\qalgeb K{\alg\FF}  \cong \gtwm K{\FF}o$, where $\alg{ o}=\langle\set o;\wedge,\FF\rangle$.
\end{remark}

\begin{lemma}\label{MdHlW} 
Besides the assumptions of Lemma~\ref{MicoDieAE}, assume that $K$ is a proper subgroup of $\alg\FF$, and that
$\alg U=\gensub a$ is a $1$-generated $\alg K$-semilattice. Then, for every $t\in T$, 
$\pair at$ generates the $\alg\FF$-semilattice $\gtwm K{\FF}U$.  
\end{lemma}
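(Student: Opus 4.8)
The plan is to show that the subalgebra of $\gtwm K{\FF}U$ generated by $\pair at$ is the whole universe $\set o\cup(U\times\rep)$. Since $\alg U=\gensub a$ is $1$-generated over $\alg K$, every element of $U$ has the form $w(a)$ for a unary $\alg K$-semilattice term $w$; each such $w(a)$ equals $\bigwedge\set{k(a):k\in H}$ for a finite nonempty $H\subseteq K$ by Lemma~\ref{l:f1}\eqref{l:f1a}. So it suffices to produce, from $\pair at$, all elements of the form $\pair{k(a)}{s}$ with $k\in K$, $s\in\rep$, and then close under $\wedge$ to get the rest of $U\times\rep$, and finally observe that $\wedge$ of two elements in different $\rep$-fibres yields $o$.

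The key computational step is to track how the group action moves $\pair at$ around. First I would compute, for $g\in\FF$, that $g(\pair at)=\pair{gtf^{-1}(a)}{f}$ where $f\in\rep\cap gtK$. As $g$ ranges over $\FF$, the second coordinate $f$ ranges over all of $\rep$ (because $gtK$ runs through every coset of $K$), so every fibre $\set s$, $s\in\rep$, is hit. Moreover, fixing a target $s\in\rep$ and varying $g$ within the set $\set{g\in\FF:gtK=sK}=stK^{-1}=st^{-1}K$ (using commutativity), the first coordinate $gtf^{-1}(a)=gts^{-1}(a)$ with $g\in st^{-1}K$ becomes $st^{-1}\cdot(t s^{-1} g)(a)$ — wait, more cleanly: write $g=st^{-1}k$ for $k\in K$; then $gts^{-1}=st^{-1}k ts^{-1}$, and since $st^{-1}ts^{-1}=\id$ and everything commutes, this is just $k$. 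Hence $g(\pair at)=\pair{k(a)}{s}$, and as $k$ ranges over $K$ we obtain every $\pair{k(a)}{s}$ in the fibre over $s$.

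Then I close under $\wedge$: for a fixed $s$, $\bigwedge_{k\in H}\pair{k(a)}{s}=\pair{\bigwedge_{k\in H}k(a)}{s}=\pair{w(a)}{s}$, and by Lemma~\ref{l:f1}\eqref{l:f1a} every element of $U$ arises as such a $w(a)$; so $U\times\set s\subseteq\gensub{\pair at}$ for each $s\in\rep$. Finally, taking $s_1\neq s_2$ in $\rep$ and meeting an element of the $s_1$-fibre with one of the $s_2$-fibre gives $o\in\gensub{\pair at}$. Since $K$ is a proper subgroup, $|\rep|\ge 2$, so such $s_1\neq s_2$ exist and $o$ is genuinely produced (though it would also be forced once $U\times\rep$ is in the subalgebra and we apply Lemma~\ref{l:f1}\eqref{l:f1c}, or simply because it is in the universe by definition and the subalgebra must be nonempty). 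This exhausts $\set o\cup(U\times\rep)$, so $\gensub{\pair at}=\gtwm K{\FF}U$.

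The main obstacle is purely bookkeeping: getting the exponent/coset arithmetic right so that, for a prescribed pair $(k,s)\in K\times\rep$, one exhibits the explicit $g\in\FF$ with $g(\pair at)=\pair{k(a)}{s}$, and verifying that the required $f=s$ is indeed the representative of $gtK$. Commutativity of $\alg\FF$ is what makes the cancellation $st^{-1}\cdot k\cdot ts^{-1}=k$ work, and one must be careful that $\pair at$'s generating role does not depend on the chosen $t\in T$ — but that is immediate once the above shows the subalgebra is everything regardless of $t$. No genuine difficulty beyond this; the lemma is essentially a direct unwinding of the definition \eqref{nfCPlTwm} combined with Lemma~\ref{l:f1}\eqref{l:f1a}.
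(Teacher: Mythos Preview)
Your proof is correct and follows essentially the same approach as the paper: use the $\alg\FF$-action to reach every $\pair{k(a)}{s}$ with $k\in K$ and $s\in\rep$, then close under $\wedge$ via Lemma~\ref{l:f1}\eqref{l:f1a} to obtain all of $U\times\rep$, and finally pick up $o$ using $|\rep|\geq 2$. The only cosmetic difference is organizational: the paper first applies $g\in K$ to fill the $t$-fibre $U\times\set t$ and then translates each $\pair ut$ to $\pair uf$ by applying $t^{-1}f$, whereas you parameterize in one shot by writing $g=st^{-1}k$ and checking directly that $g(\pair at)=\pair{k(a)}{s}$; both arguments rest on the same coset bookkeeping and the commutativity of $\alg\FF$.
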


\begin{proof}
Let $B=\genpsub{\gtwm K{\FF}U}{\pair at}$. It suffices to show that $U\times T\subseteq B$ since then $o\in B$ follows from $|T|>1$.
If $g\in K$, then $t\in tK\cap T=tgK\cap T=gtK\cap T$, and we obtain that $g(\pair at)=\pair{gtt^{-1}(a)}{t}=\pair{g(a)}t$. 
  Hence, it follows from Lemma~\ref{l:f1}\eqref{l:f1a} that $\pair ut\in B$ for all $u\in U$. Now, let $f\in T$. Then 
$f\in fK\cap T=t^{-1}ftK\cap T$, whence 
$(t^{-1}f)(\pair ut)=
\pair{(t^{-1}f)tf^{-1}(u)}f=\pair uf$.
Thus $\pair uf\in B$. Hence, $U\times T\subseteq B$, as desired.  
\end{proof}

\section{Minimal quasivarieties of semilattices over finite abelian groups}
\label{seCfinigrp}
Given a minimal quasivariety $\var R$  of $\alg\FF$-semilattices, we associate a subgroup $\sgrp_{\var R}$ with $\var R$ as follows. Take the free algebra $\freealg{\var R}a$, and define $\sgrp_{\var R}$ as the \emph{stabilizer} of $a$, that is,  $\sgrp_{\var R}=\set{g\in\FF: g(a)=a}$. The minimal quasivarieties of  semilattices over \emph{finite} abelian groups are satisfactorily described by the following theorem.

\begin{theorem}\label{fIndSrpt} Let $\alg\FF$ be a \emph{finite} abelian group. Then the map $\alpha\colon \var R\mapsto \sgrp_{\var R}$ is a bijection from the set $\minqvar{\alg\FF}$ of minimal quasivarieties to $\Sub{\alg \FF}$. The inverse map $\beta\colon \Sub{\alg \FF}\to \minqvar{\alg\FF}$   is   defined by  $H\mapsto \var Q\bigl( \mmalg \bigr)$. 
\end{theorem}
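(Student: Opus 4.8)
The plan is to show that $\alpha$ and $\beta$ are mutually inverse bijections by combining the structural results already proved with the finiteness of $\alg\FF$. First I would check that $\beta$ is well-defined: for $H\in\Sub{\alg\FF}$, Proposition~\ref{soZRc}(A) already tells us that $\mmalg$ generates a minimal quasivariety, so $\beta(H)=\var Q(\mmalg)\in\minqvar{\alg\FF}$. The only subtlety is that when $H=\FF$ the algebra $\mmalg$ is the two-element $\alg\FF$-semilattice, which generates the $\alg\FF$-trivial minimal quasivariety $\var S$ from Theorem~\ref{t:eq}(A); so $\beta$ does land in $\minqvar{\alg\FF}$ in every case, including the "degenerate" one.

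Next I would verify $\alpha\circ\beta=\mathrm{id}_{\Sub{\alg\FF}}$, i.e.\ that the stabilizer of a free generator of $\var Q(\mmalg)$ is exactly $H$. For $H\subsetneq\FF$, Proposition~\ref{soZRc}(A) gives $\freealg{\var Q(\mmalg)}1\cong\mmalg$, and the free generator corresponds to the coset $1H=H$ itself (an atom). Its stabilizer is $\set{g\in\FF: gH=H}=H$, which is the computation already carried out in the displayed chain of equivalences in the proof of Proposition~\ref{soZRc}(B). For $H=\FF$, $\freealg{\var S}1$ is one-element, so its stabilizer is all of $\FF$, again $H$. Hence $\sgrp_{\beta(H)}=H$, so $\alpha\circ\beta$ is the identity; in particular $\alpha$ is surjective and $\beta$ is injective.

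The heart of the matter, and the main obstacle, is showing that $\alpha$ is injective (equivalently $\beta\circ\alpha=\mathrm{id}$), and this is where the hypothesis that $\alg\FF$ is \emph{finite} must be used — it is exactly the point where the finite and infinite cases diverge. Let $\var R\in\minqvar{\alg\FF}$ with $\sgrp_{\var R}=H$; I must prove $\var R=\var Q(\mmalg)$, equivalently $\freealg{\var R}1\cong\mmalg$. By Lemma~\ref{l:f1}\eqref{l:f1d}, since $\alg\FF$ is finite the free algebra $\alg A:=\freealg{\var R}a$ is finite and has a zero element $o$. If $\alg A$ is trivial then $H=\FF$ and $\var R=\var S=\beta(\FF)$ by Theorem~\ref{t:eq}(A), so assume $\alg A$ is nontrivial. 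Using Lemma~\ref{l:f1}\eqref{l:f1a}, every element of $\alg A$ has the form $\bigwedge\set{h(a):h\in S}$ for some finite $S\subseteq\FF$; I would argue that because $\var Q(\alg A)$ is minimal, Theorem~\ref{t:eq}(B)\eqref{t:eqd} forces each nonzero element to generate a copy of $\alg A$, and then show that the atoms of $\alg A$ must be precisely the elements $h(a)$ for $h$ ranging over the cosets of $H$ — the stabilizer condition $g(a)=a\iff g\in H$ means $h_1(a)=h_2(a)$ can only happen when $h_1H=h_2H$ (one direction is the stabilizer; the other requires that $h_1(a)=h_2(a)$ with the homogeneity from \eqref{t:eqd} forces equality of the generated subalgebras, hence via Lemma~\ref{l:f3}\eqref{l:f3a} the identity $h_1(x)\approx h_2(x)$ on $\alg A$, giving $h_1^{-1}h_2\in H$). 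It remains to see that $\alg A$ has no elements strictly between $o$ and its atoms and no elements above two distinct atoms other than $o$: this follows because any such element would generate a proper nontrivial subalgebra not isomorphic to $\alg A$ (it would have fewer atoms, being finite), contradicting \eqref{t:eqd}. Therefore $\alg A$ consists exactly of $o$ together with the $[\FF:H]$ atoms $h(a)$, $h\in\FF/H$, with meet of distinct atoms equal to $o$ and $g$ permuting the atoms by $g\cdot hH\mapsto ghH$ — which is precisely the description of $\mmalg$. Hence $\freealg{\var R}1\cong\mmalg$ and $\var R=\var Q(\mmalg)=\beta(H)$, completing the proof that $\alpha$ and $\beta$ are inverse bijections.
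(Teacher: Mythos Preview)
Your overall architecture matches the paper's: verify that $\beta$ lands in $\minqvar{\alg\FF}$ via Proposition~\ref{soZRc}, check $\alpha\circ\beta=\id$ by computing the stabilizer of a generator of $\mmalg$, and finish by proving $\freealg{\var R}1\cong\mmalg$ for each non-$\alg\FF$-trivial $\var R$. The first two parts are fine (minor point: in the $\alpha\circ\beta$ step the paper allows the free generator to be an arbitrary coset $fH$, but your choice $1H=H$ works equally well).

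The gap is in your last paragraph. You assert that a nonzero element $c\in\alg A$ which is not one of the $h(a)$'s ``would generate a proper nontrivial subalgebra not isomorphic to $\alg A$ (it would have fewer atoms).'' But this is precisely what Theorem~\ref{t:eq}(B)\eqref{t:eqd} \emph{denies}: every nonzero $c$ generates a subalgebra isomorphic to $\alg A$, and since $\alg A$ is finite this forces $\gensub c=A$. So the subalgebra is never proper, and the ``fewer atoms'' claim has no footing without further argument. (One can rescue the idea: if $o<c<a$ then $\gensub c=A$ gives $a=\bigwedge_{g\in S}g(c)\leq g(c)<g(a)$ for some $g$, whence $a<g(a)<g^2(a)<\cdots$ contradicts the finite order of $g$ in $\alg\FF$; but you have not written this, and your phrasing ``no elements above two distinct atoms other than $o$'' suggests some confusion about the order direction, since $o$ is the minimum.)

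The paper handles this step more directly. Since $\alg A$ is finite with a zero it has an \emph{atom} $b$; because each $g\in\FF$ acts as a semilattice automorphism, it sends atoms to atoms, so $\gensub b$ consists only of atoms together with $o$. By \eqref{t:eqd} and finiteness, $\gensub b=A$, so \emph{every} nonzero element of $\alg A$ is an atom---in particular $a$ is. Then $A=\set o\cup\set{g(a):g\in\FF}$, and the map $g(a)\mapsto gH$, $o\mapsto\emptyset$ is the desired isomorphism $\alg A\to\mmalg$. Starting from an actual atom, rather than trying to prove the $h(a)$'s are atoms directly, is the missing idea.
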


\begin{proof} Since the (minimal) quasivariety of $\alg\FF$-trivial $\alg\FF$-semilattices clearly corresponds to the case $\sgrp =\FF$ and vice versa, we can assume that $\var R$ is not $\alg\FF$-trivial and that $\sgrp$ is a proper subgroup. 
We know from Proposition~\ref{soZRc}  that $\beta$ is an injective map 
from $\Sub{\alg\FF}$ to $\minqvar{\alg \FF}$. Obviously, $\alpha$ is a map from 
$\minqvar{\alg \FF}$ to $\Sub{\alg\FF}$. 

To prove that the composite map $\alpha\circ\beta$ act identically on $\Sub{\alg\FF}$, assume that $\sgrp$ is a proper subgroup of $\alg\FF$. Then $\beta(\sgrp)$ is the quasivariety $\var Q\bigl( \mmalg \bigr)$. By Proposition~\ref{soZRc}(A),  $\mmalg$ is (isomorphic to) the free algebra of rank 1 in this quasivariety. Its free generator is of the form $fH$ for some $f\in \FF$. Hence $\alpha(\beta(\sgrp))$ is the stabilizer of $fH$, that is 
\[\alpha(\beta(\sgrp))=\set{g\in \FF: 
gfH=fH}= \set{g\in \FF: 
gff^{-1}\in H}=H\text.
\]
This proves that  $\alpha\circ\beta$ acts identically on $\Sub{\alg\FF}$ (even when $\alg\FF$ is infinite).

Next, we  prove that $\beta\circ\alpha$ acts identically on $\minqvar{\alg\FF}$. To do so, let $\var R\in\minqvar{\alg\FF}$, distinct from the quasivariety of $\alg\FF$-trivial $\alg\FF$-semilattices. 
Then  $\freealg{\var R}a$ generates $\var R$ by Theorem~\ref{t:eq}(A). 
We know from Lemma~\ref{l:f1}\eqref{l:f1d} that $o\in \freealg{\var R}a$, $\freealg{\var R}a$ is finite, and  $\freealg{\var R}a$ has an atom, $b$. 
Since every $g\in \FF$ preserves the semilattice order, 
the subalgebra $\genpsub{\freealg{\var R}a}b$ consists of some atoms and, possibly, of $o$.
This subalgebra is isomorphic to $\freealg{\var R}a$ by Theorem~\ref{t:eq}(B). Hence, by finiteness, this subalgebra equals $\freeaset{\var R}a$.
It follows that $\freeaset{\var R}a=\set o\cup \set{g(a): a\in \FF}$; note that $g_1(a)=g_2(a)$ may occur with distinct $g_1,g_2\in \FF$. Let $\sgrp=\alpha(\var R)=\set{g\in\FF: g(a)=a}$, the stabilizer of $a$. 
It suffices to show~that  
\begin{equation}\label{mYmV}
\text{$\freealg{\var R}a\cong\mmalg$ }
\end{equation}
since then the minimality of $\var R$ implies that 
\[\beta(\alpha(\var R ))= \beta(\sgrp)=\var Q\bigl(\mmalg\bigr)=\var R\,\text.
\] 
Define a map $\phi\colon\freealg{\var R}a\to\mmalg$ by 
$g(a)\mapsto g\sgrp$ and $o\mapsto \emptyset$. Since $f(a)=g(a)$ if{f} 
$a=f^{-1}g(a)$ if{f} $f^{-1}g\in \sgrp$ if{f} $f\sgrp=g\sgrp$, we obtain that $\phi$ is indeed a map, and it is a bijection. 
Since the nonzero elements are atoms both in $\freealg{\var R}a$ and $\mmalg$, $\phi$ preserves the meet.
Finally, $\phi$ is an isomorphism since, for any $f,g\in\FF$,   
$\phi\bigl(f(g(a))\bigr) = \phi\bigl((fg)(a)\bigr)= (fg)H=f(gH)=f\phi\bigl(g(a)\bigr)\text. 
$
\end{proof}

\section{When $\alg\FF$ is not necessarily finite}\label{secNonFini}
The set $\minqvar{\alg\FF}$ of minimal quasivarieties of $\alg\FF$-semilattices 
splits into three disjoint subsets, 
\begin{align*}
\minqvan{\alg\FF}&=\set{\var R\in \minqvar{\alg\FF}: o\in \freealg{\var R}1\text{, } |\freealg{\var R}1|\geq 2},\cr
\minqvae{\alg\FF}&=\set{\text{the class of $\alg\FF$-trivial $\alg\FF$-semilattices} }\cr
&=\set{\var R\in \minqvar{\alg\FF}: |\freealg{\var R}1|=1}
\text{, and}\cr
\minqvak{\alg\FF}&=\set{\var R\in \minqvar{\alg\FF}: o\notin \freealg{\var R}1}\text.
\end{align*}
For example, by Theorem~\ref{djezmThm},
\begin{align*}
\minqvan{\algcikl \infty}=\set{&
\var Q( \alg A_k ):k\in\mathbb N} \cup \set{\var Q( \alg A_\infty )}\cr
\cup  \set{& \var Q( \alg B_k^{+} ) , \var Q( \alg B_k^{-} ) , \var Q( \alg C_k ):2\leq k\in\mathbb N}\text{ and}
\cr
\minqvak{\algcikl \infty}=
\set{
&\var Q(\alg B_1^+  ) , \var Q( \alg B_1^- ) , \var Q( \alg C_1 ) }\text.
\end{align*}

Let $\var R\in \minqvar{\alg\FF}$ such that $\var R$ is distinct from the quasivariety of $\alg\FF$-trivial $\alg\FF$-semilattices.  Then, by  Lemma~\ref{l:f3}, there are exactly two cases: either $\var R\in \minqvan{\alg\FF}$ and all members of $\var R$ have $o$, or $\var R\in \minqvak{\alg\FF}$ and there are algebras in $\var R$ without $o$. The obvious singleton set $\minqvae{\alg\FF}$ deserves no separate attention.

%
The target of this section is to describe the members of $\minqvan{\alg\FF}$. As we know from Theorem~\ref{t:eq}, they are determined by their free algebras on one generator, that is, by the nontrivial $1$-generated  $\alg\FF$-semilattices $\alg A$ with zero that satisfy condition \eqref{t:eqc} (or \eqref{t:eqd}) of Theorem~\ref{t:eq}.

To give the main definition of this section, assume that  $\var R\in \minqvan{\alg\FF}$ and  $\alg A=\freealg{\var R}a$. Let $K=K_{\var R}$ be the set
$\set{g\in\FF: a\wedge g(a)\neq o}$. (We will show that $K$ is a subgroup of $\alg\FF$.) Let $U=\{t(a): t$ is a unary term in the language of $\alg K$-semilattices$\}$. Then we define $\alg U=\alg U_{\var R}=\langle U;\wedge,K\rangle\text.$

\begin{lemma}\label{l:kdEt} Assume that  $\var R\in \minqvan{\alg\FF}$. Then, for $K=K_{\var R}$ and $\alg U=\alg U_{\var R}$ defined above, the following assertions hold.
\begin{enumeratei}
\item\label{l:kdEta} $K$ is a subgroup of $\alg\FF$.
\item\label{l:kdEtb} Let $f_1,\ldots,f_n\in\FF$. Then $f_1(a)\wedge\cdots\wedge f_n(a)\neq o$ if{f} $f_1K=\cdots =f_nK$.
\item\label{l:kdEtc} $\alg U$ is a $\alg K$-semilattice. If $\alg U$ is nontrivial,  then $\alg U$  has no zero element.
\item\label{l:kdEtd} If $\alg U$ is nontrivial, then $\var Q(\alg U) \in\minqvak{\alg K}$ and $\alg U\cong \freealg{\var Q(\alg U)}1$. 
\item\label{l:kdEte} $\alg A=\freealg{\var R}a$ is isomorphic to  $\gtwm K{\FF}U$.
\end{enumeratei}
\end{lemma}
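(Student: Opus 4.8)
The plan is to establish \eqref{l:kdEta}--\eqref{l:kdEte} in this order, each drawing on its predecessors, and to use throughout that $\alg A=\freealg{\var R}a$ satisfies condition \eqref{t:eqc} of Theorem~\ref{t:eq}. Indeed, $|A|\ge 2$, so $\var R$ is not the $\alg\FF$-trivial quasivariety and $\var R=\var Q(\alg A)$ by Theorem~\ref{t:eq}(A); thus \eqref{t:eqa} holds, hence so do \eqref{t:eqb}--\eqref{t:eqd}. The key tool will be: for every $b\in A\setminus\set o$ the subalgebra $\gensub b$ is not a singleton (Lemma~\ref{lnewie}), so \eqref{t:eqb} provides an isomorphism $\psi\colon\alg A\to\gensub b$ with $\psi(a)=b$, and since $\psi$ is injective and fixes $o$, we have $b\wedge g(b)=\psi(a\wedge g(a))\ne o$ exactly when $a\wedge g(a)\ne o$; in other words the ``overlap set'' $\set{g\in\FF: b\wedge g(b)\ne o}$ equals $K$ for \emph{every} nonzero $b$. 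Given this, \eqref{l:kdEta} is quick: $1\in K$ since $a\ne o$; if $g\in K$ then applying the automorphism $g^{-1}$ to $a\wedge g(a)\ne o$ gives $g^{-1}\in K$; and if $g,h\in K$, put $b:=a\wedge g(a)\ne o$, so $h$ lies in the overlap set of $b$, whence $o\ne b\wedge h(b)=a\wedge g(a)\wedge h(a)\wedge(gh)(a)\le a\wedge(gh)(a)$ (using $hg=gh$) and $gh\in K$. For \eqref{l:kdEtb}: the case $n=2$ is immediate, since $f_1(a)\wedge f_2(a)\ne o\iff a\wedge f_1^{-1}f_2(a)\ne o\iff f_1^{-1}f_2\in K\iff f_1K=f_2K$; the implication $\Rightarrow$ for general $n$ follows by passing to pairwise meets; and $\Leftarrow$ is an induction on $n$ after reducing to $f_1=1$ (then $f_2,\dots,f_n\in K$): by induction $b:=a\wedge f_2(a)\wedge\cdots\wedge f_{n-1}(a)\ne o$, its overlap set is $K\ni f_n$, and $o\ne b\wedge f_n(b)\le a\wedge f_2(a)\wedge\cdots\wedge f_n(a)$.

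For \eqref{l:kdEtc}: $U$ is closed under $\wedge$ and under each $g\in K$ straight from its definition as the set of values at $a$ of unary $\alg K$-terms, so $\alg U$ is an $\alg K$-subalgebra of the $\alg K$-reduct of $\alg A$, hence an $\alg K$-semilattice generated by $a$. Suppose it is nontrivial with a zero element $o'$; writing $o'=t(a)$ and applying Lemma~\ref{l:f1}\eqref{l:f1a} over $\alg K$ shows $o'$ is a meet of elements $h(a)$ with $h$ in the single coset $K$, so $o'\ne o$ by \eqref{l:kdEtb}. Then $\set{o'}$ is not an $\alg\FF$-subalgebra (Lemma~\ref{lnewie}), $\gensub{o'}$ is nontrivial and hence $\cong\alg A$ via $\psi$ with $\psi(a)=o'$; since every $g\in K$ fixes the zero $o'$ of $\alg U$, $\psi(a\wedge g(a))=o'\wedge g(o')=o'=\psi(a)$, forcing $a\le g(a)$ for all $g\in K$, hence $g(a)=a$ for all $g\in K$, hence $U=\set a$ --- contradicting nontriviality. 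For \eqref{l:kdEtd}: assuming $\alg U$ nontrivial, it is a $1$-generated zero-free $\alg K$-semilattice by \eqref{l:kdEtc}, and I check condition \eqref{t:eqd} of Theorem~\ref{t:eq} for it over the group $\alg K$: any $v=s(a)\in U$ is a meet of elements $h(a)$, $h\in K$, so $v\ne o$ in $\alg A$ by \eqref{l:kdEtb}, and the isomorphism $\psi\colon\alg A\to\gensub v$ with $\psi(a)=v$ carries $U$ onto the $\alg K$-subalgebra of $\alg U$ generated by $v$, restricting to an $\alg K$-isomorphism. Hence $\var Q(\alg U)$ is a minimal quasivariety of $\alg K$-semilattices and $\alg U\cong\freealg{\var Q(\alg U)}1$ by Theorem~\ref{t:eq}(B) and \eqref{t:eqe}; as this free algebra is zero-free, $\var Q(\alg U)\in\minqvak{\alg K}$. (Also $K\ne\FF$: otherwise $\alg U=\alg A$, a nontrivial $\alg\FF$-semilattice \emph{with} a zero, against \eqref{l:kdEtc}; so $K$ is a proper subgroup, as needed below.)

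For \eqref{l:kdEte}: fix a transversal $\rep$ of the cosets of $K$ (by Lemma~\ref{MicoDieAE} the choice is immaterial). By Lemma~\ref{l:f1}\eqref{l:f1a} and \eqref{l:kdEtb}, every element of $\alg A$ is $o$ or of the form $f(u)$ with $f\in\FF$, $u\in U$: group the factors $h(a)$ of a term value into one coset $fK$ (if they span more than one coset, the value is $o$), pull $f$ out, and note the remaining meet of elements $k(a)$, $k\in K$, lies in $U$. Define $\phi\colon\gtwm K{\FF}U\to\alg A$ by $o\mapsto o$ and $\pair ut\mapsto t(u)$. Surjectivity follows from this normal form (write $f=tk$, $t\in\rep$, $k\in K$, so $f(u)=t(k(u))=\phi(\pair{k(u)}{t})$). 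Injectivity and preservation of $\wedge$ both rest on one observation: if $t_1,t_2\in\rep$, $u_1,u_2\in U$ and $t_1(u_1)\wedge t_2(u_2)\ne o$, then --- expanding each $t_i(u_i)$ as a meet of elements $h(a)$ with $h\in t_iK$ and invoking \eqref{l:kdEtb} --- all these group elements lie in a single coset, so $t_1K=t_2K$ and $t_1=t_2$, after which $t_1(u_1)=t_2(u_2)$ gives $u_1=u_2$. Finally, for $g\in\FF$ and $f\in\rep\cap gtK$ one has $gtf^{-1}\in K$, so $\phi(g(\pair ut))=f(gtf^{-1}(u))=(gt)(u)=g(t(u))=g(\phi(\pair ut))$ by commutativity of $\alg\FF$. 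Thus $\phi$ is an isomorphism, proving \eqref{l:kdEte}.

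The crux is the multiplicative closure of $K$ in \eqref{l:kdEta} and the ``no zero'' half of \eqref{l:kdEtc}: neither is a semilattice triviality, and both hinge on the non-obvious coincidence of the overlap sets of all nonzero elements, which is precisely where condition \eqref{t:eqc} of Theorem~\ref{t:eq} is used. Once \eqref{l:kdEtb} is in hand, \eqref{l:kdEtd} and \eqref{l:kdEte} amount to coset bookkeeping combined with the term reduction of Lemma~\ref{l:f1}\eqref{l:f1a}.
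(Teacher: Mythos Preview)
Your proof is correct and follows essentially the same route as the paper's. The one organizational difference worth noting is that you isolate upfront the observation that every nonzero $b\in A$ has the same ``overlap set'' $\set{g\in\FF:b\wedge g(b)\ne o}=K$, obtained from the isomorphism $\psi\colon\alg A\to\gensub b$ of Theorem~\ref{t:eq}\eqref{t:eqb} together with $\psi(o)=o$ (via Lemma~\ref{lnewie}). You then use this single observation to get both the multiplicative closure of $K$ in \eqref{l:kdEta} and the inductive step in \eqref{l:kdEtb}. The paper instead proves closure of $K$ by contradiction through Lemma~\ref{l:f3}\eqref{l:f3b}, and proves \eqref{vkvnX} by an argument that amounts to your overlap-set statement but without naming it. Your packaging is a bit cleaner; the mathematical content and the remaining parts \eqref{l:kdEtc}--\eqref{l:kdEte} match the paper's proof closely. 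One small point you leave implicit but should perhaps state: for injectivity of $\phi$ in \eqref{l:kdEte} you need that $t(u)\ne o$ for all $\pair ut\in U\times\rep$, which follows from \eqref{l:kdEtb} since $u$ is a meet of elements $h(a)$ with $h\in K$, hence $t(u)$ is a meet over the single coset $tK$.
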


If $\alg\FF$ is finite, then $\alg U$ is trivial by \eqref {mYmV}. The  case $|U|=1$ is  important even if finiteness is not assumed  since Lemma~\ref{l:kdEt}, together with Remark~\ref{remfanshaped}, clearly implies the following corollary.

\begin{corollary} Let $\var R\in \minqvan{\alg\FF}$. Then  $\alg U_{\var R}$  is 1-element if{f} $\,\freealg{\var R}1$ is $($up to isomorphism$)$   $\qalgeb K{\alg\FF}$ with a proper subgroup $K$ of $\alg \FF$.
\end{corollary}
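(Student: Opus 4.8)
The corollary follows by combining Lemma~\ref{l:kdEt} with Remark~\ref{remfanshaped}, so the plan is essentially to chase through the isomorphism $\alg A\cong\gtwm K{\FF}U$ and observe what happens at the two extremes for $\alg U$. First I would assume $\var R\in\minqvan{\alg\FF}$ and write $\alg A=\freealg{\var R}1$, $K=K_{\var R}$, $\alg U=\alg U_{\var R}$, so that by Lemma~\ref{l:kdEt}\eqref{l:kdEte} we have $\alg A\cong\gtwm K{\FF}U$. I would also note at the outset that $K$ is a \emph{proper} subgroup of $\alg\FF$: if $K=\FF$ then by Lemma~\ref{l:kdEt}\eqref{l:kdEtb} we would have $g(a)\wedge a\neq o$ for all $g\in\FF$, and since $\var R\notin\minqvae{\alg\FF}$ there is some $g$ with $g(a)\neq a$, forcing $a\wedge g(a)$ to be a nonzero element strictly below $a$ whose subalgebra (by \eqref{t:eqc} of Theorem~\ref{t:eq}, applied to $\alg A$) is isomorphic to $\alg A$; iterating, one contradicts that $\alg A$ has the particular order structure of an algebra with zero — more cleanly, $K=\FF$ would make $\gtwm K{\FF}U$ collapse since $\rep=\set 1$ and then $\alg A\cong\alg U$ has no zero, contradicting $\var R\in\minqvan{\alg\FF}$. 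So $K$ is proper, and Remark~\ref{remfanshaped} is applicable.

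For the ``if'' direction, suppose $|U_{\var R}|=1$, say $U=\set a$ with the trivial $\alg K$-action (this is forced: a one-element $\alg K$-semilattice is $\alg K$-trivial). Then $\gtwm K{\FF}U=\gtwm K{\FF}{o}$ where $\alg o=\langle\set o;\wedge,\FF\rangle$ is the one-element $\alg K$-semilattice, and by Remark~\ref{remfanshaped} this is isomorphic to $\qalgeb K{\alg\FF}$. Hence $\freealg{\var R}1\cong\qalgeb K{\alg\FF}$ with $K$ a proper subgroup, as claimed.

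For the ``only if'' direction, suppose $\freealg{\var R}1\cong\qalgeb{K'}{\alg\FF}$ for some proper subgroup $K'$ of $\alg\FF$. By Remark~\ref{remfanshaped}, $\qalgeb{K'}{\alg\FF}\cong\gtwm{K'}{\FF}{o}$, so $\alg A\cong\gtwm{K'}{\FF}{o}$; combined with $\alg A\cong\gtwm K{\FF}U$ from Lemma~\ref{l:kdEt}\eqref{l:kdEte}, this gives $\gtwm K{\FF}U\cong\gtwm{K'}{\FF}{o}$. The right-hand algebra is the Mar\'oti semilattice $\qalgeb{K'}{\alg\FF}$, which (as recorded right after its definition) consists of atoms and a zero. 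Therefore $\gtwm K{\FF}U$ also consists of atoms and a zero; but in $\gtwm K{\FF}U$ the nonzero elements are exactly the pairs $\pair ut$ with $u\in U$, $t\in\rep$, and $\pair {u_1}t\wedge\pair{u_2}t=\pair{u_1\wedge u_2}t$, so if $|U|\geq 2$ we can pick $u_1>u_2$ in $U$ (using that $\alg U$ has no zero by Lemma~\ref{l:kdEt}\eqref{l:kdEtc}, or simply that a nontrivial semilattice has a strict comparability) and then $\pair{u_1}t$ is a nonzero element properly above the nonzero element $\pair{u_2}t$ — contradicting that every nonzero element is an atom. Hence $|U|=1$.

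The main obstacle is really just bookkeeping: making sure $K_{\var R}$ is recognized as a proper subgroup so that Remark~\ref{remfanshaped} applies and $\gtwm K{\FF}U$ is the genuine ``fan-shaped'' algebra, and then correctly reading off from the definition \eqref{nfCPlTwm} that ``all nonzero elements are atoms'' is equivalent to ``$\alg U$ is a one-element semilattice''. One small point to handle carefully is that a nontrivial $\alg U$ automatically contains two comparable distinct elements (true for any nontrivial semilattice: take any $u\neq v$, then $u\wedge v<u$ or $u\wedge v<v$), which is what is needed to produce the non-atom in $\gtwm K{\FF}U$; everything else is immediate from the quoted lemma and remark.
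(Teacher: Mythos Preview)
Your proof is correct and follows exactly the route the paper indicates (Lemma~\ref{l:kdEt} combined with Remark~\ref{remfanshaped}): use $\alg A\cong\gtwm K{\FF}U$ from part~\eqref{l:kdEte}, then identify the $|U|=1$ case with the Mar\'oti semilattice via Remark~\ref{remfanshaped}, and for the converse read off $|U|=1$ from the atoms-plus-zero structure of $\qalgeb{K'}{\alg\FF}$.

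Two small cosmetic points. First, your ``if'' and ``only if'' labels are swapped relative to the statement as written (``$\alg U_{\var R}$ is $1$-element \emph{iff} $\freealg{\var R}1\cong\qalgeb K{\alg\FF}$''). Second, your ``cleaner'' argument that $K_{\var R}\neq\FF$ is not quite right: the construction $\gtwm{\FF}{\FF}U$ does not literally collapse to $\alg U$---it is $\set o\cup(U\times\set 1)$ and always carries the zero $o$. The correct one-line argument (also implicit in Lemma~\ref{l:kdEt}) is that if $K=\FF$ then $U$ is by definition the full $\alg\FF$-subalgebra of $\alg A$ generated by $a$, i.e.\ $U=A\ni o$; this forces $|U|=1$ by part~\eqref{l:kdEtc}, hence $|A|=1$, contradicting $\var R\in\minqvan{\alg\FF}$. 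Equivalently, $o\in A$ must be expressible as $f_1(a)\wedge\dots\wedge f_n(a)$, which by part~\eqref{l:kdEtb} is impossible when all $f_iK$ coincide.
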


\begin{proof}[Proof of Lemma~\ref{l:kdEt}]
Since $\alg A$ is nontrivial, $a\neq o$ and $\id\in K$. If $g$ belongs to $K$, then so does $g^{-1}$ since $a\wedge g^{-1}(a)=g^{-1}\bigl(a \wedge g(a)\bigr)\neq o$. To get a contradiction, suppose that $f,g\in K$ but $fg\notin K$. Then $a\wedge f(a)\neq o$ and 
\begin{equation*}
o=a\wedge fg(a)\geq a\wedge f(a)\wedge g(a)\wedge fg(a)=(a\wedge f(a))\wedge g(a\wedge f(a))\text.
\end{equation*} 
It follows from Theorem~\ref{t:eq} that  $a\wedge f(a)$ generates $\alg A=\freealg{\var R}a$. Hence, by applying Lemma~\ref{l:f3}\eqref{l:f3b} to the element $a\wedge f(a)$ and to the term $s(x)=x\wedge g(x)$, we obtain that the identity $x\wedge g(x)\wedge y\approx x\wedge g(x)$ holds in $\var R=\var Q(\alg A)$. Substituting $\pair ao$ for $\pair xy$, we obtain that $o= a\wedge g(a)\wedge o\approx a\wedge g(x)$, which contradicts $g\in K$.
Thus $fg\in K$, and $K$ is a subgroup of $\alg\FF$. This proves \eqref{l:kdEta}.

To prove \eqref{l:kdEtb}, assume that $f_1(a)\wedge\dots\wedge f_n(a)\neq o$, and let $i,j\in\set{1,\ldots,n}$ with $i\neq j$. Then $f_i(a)\wedge f_j(a)\neq o$ since $f_i(a)\wedge f_j(a)\geq f_1(a)\wedge\dots\wedge f_n(a)$. Thus 
$o=f_i^{-1}(o)\neq f_i^{-1}\bigl(f_i(a) \wedge f_j(a) \bigr) =
    a\wedge f_i^{-1}f_j(a)$. Thus $f_i^{-1}f_j\in K$, that is, $f_iK=f_jK$, for all $i,j\in\set{1,\ldots,n}$. This proves the ``only if'' part of \eqref{l:kdEtb}.

To prove the ``if'' part,  we claim 
that, for all $n\in \mathbb N$,
\begin{equation}\label{vkvnX}
\text{if }g_1,\ldots,g_n\in K\text{, then } a\wedge g_1(a)\wedge\cdots\wedge g_n(a)\neq o\text.
\end{equation}
Suppose, for contradiction, that there is a least $n$ such that \eqref{vkvnX} fails. 
By the definition of $K$, $2\leq n$. Let $b=a\wedge g_1(a)\wedge\cdots\wedge g_{n-1}(a)$. By the minimality of $n$, we have that $b\neq o$ but $b\wedge g_n(a)=o$. Since $b\leq a$, we obtain that
$b\wedge g_n(b)\leq b\wedge g_n(a)$, that is, $b\wedge g_n(b)=o$. 
By Theorem~\ref{t:eq}(B), there is a $\alg\FF$-semilattice  isomorphism $\phi$ from $\alg A$ to the subalgebra $\gensub b$ such that $\phi(a)=b$. 
By Lemma~\ref{lnewie}, $\phi(o)=o$ since $\set{o}$ is the only singleton subalgebra of $\alg A$. 
Hence  $\phi\bigl(a\wedge g_n(a) \bigr) = \phi(a)\wedge g_n\bigl(\phi(a)\bigr) =b\wedge g_n(b)=o=\phi(o)$, 
which implies that 
$a\wedge g_n(a)=o$. Thus $g_n\notin K$, which is a contradiction that proves  \eqref{vkvnX}.

Next, assume that  $f_1K=\cdots =f_nK$. Then $g_2=f_2f_1^{-1},\dots, g_n=f_nf_1^{-1}$ belong to $K$, and \eqref{vkvnX} yields that $ a\wedge g_2(a)\wedge\cdots\wedge g_n(a)\neq o$. Thus
$o=f_1(o)\neq f_1(a)\wedge f_1g_2(a)\wedge \cdots\wedge f_1g_n(a)= f_1(a)\wedge\cdots\wedge f_n(a)$. This proves 
\eqref{l:kdEtb}.
 
It is obvious that $\alg U$ is a $\alg K$-semilattice, and it is generated by $a$. Hence, to give  a proof for \eqref{l:kdEtc} by  contradiction, suppose that $\alg U$ is nontrivial but it has a zero element $b$, distinct from $a$. It follows from Lemma~\ref{l:f1}\eqref{l:f1a} that $b$ is of the form $f_1(a)\wedge\cdots\wedge f_n(a)$ for some $f_1,\ldots,f_n\in K$. Since  $f_1K=\cdots=f_nK=K$,  we conclude from \eqref{l:kdEtb} that $b\neq 0$. Applying Lemma~\ref{l:f1}\eqref{l:f1c} to the $\alg K$-semilattice $\alg U$, we obtain that
\begin{equation}\label{dcCShS}
\text{for all $f\in K$, $f(b)=b$.}
\end{equation}
By Lemma~\ref{lnewie}, the $\alg\FF$-subsemilattice $\alg B=\genpsub{\alg A} b$ is not a singleton. Thus Theorem~\ref{t:eq}(B)\eqref{t:eqb} yields a $\alg\FF$-semilattice isomorphism from $\alg A$ onto $\alg B$ such that $a\mapsto b$. 
Applying the inverse of this isomorphism to \eqref{dcCShS}, we conclude that $f(a)=a$, for all $f\in K$. Thus $\alg U$, the $\alg K$-semilattice generated by $a$, is a singleton. This contradiction proves \eqref{l:kdEtc}.

Next, we deal with \eqref{l:kdEtd}. Consider an arbitrary $b\in U$. By Theorem~\ref{t:eq}(B), it suffices to show that the $\alg K$-semilattice $\alg B=\genpsub{\alg U} b$  is isomorphic to $\alg U$. We know from 
Theorem~\ref{t:eq}(B) that there is a $\alg \FF$-semilattice isomorphism $\phi\colon \alg A=\genpsub{\alg A}a\to 
\genpsub{\alg A}b$ such that $\phi(a)=b$.
This implies that, for all unary $\alg K$-semilattice terms $r$ and $s$, 
$r(a)=s(a)$ if{f} $r(b)=s(b)$. Hence we conclude that the restriction 
$\restrict\phi{\alg U}$ of $\phi$ to 
$\alg U$ is a $\alg K$-semilattice isomorphism  from $\alg U=\genpsub{\alg U}a$ to $\alg B= \genpsub{\alg U}b$. This proves~\eqref{l:kdEtd}. 

To prove \eqref{l:kdEte}, 
 fix a set $T$ of representatives of the (left) cosets of $\alg K$ such that $T\cap K=\set 1$. Consider 
$\gtwm K{\FF}U$, defined in \eqref{nfCPlTwm}, and the map
\[
\phi \colon \gtwm K{\FF}U\to \alg A,\quad\text{defined by }\pair ut\mapsto t(u)\text{ and } o\mapsto o\text.
\]
We claim that $\phi$ is an isomorphism. To see this, let $u,u_1,u_2\in U$, $t,t_1,t_2\in T$ with $t_1\neq t_2$, and $g\in \FF$. Then
\begin{align*}
\phi(\pair{u_1}{h} \wedge \pair{u_2}{h} )  &= \phi(\pair{u_1\wedge u_2}{h} )=
h(u_1\wedge u_2)= h(u_1)\wedge  h(u_2)\cr
&= \phi(\pair{u_1}{h}) \wedge\phi(\pair{u_2}{h})\text. 
\end{align*}
By Lemma~\ref{l:f1}\eqref{l:f1a}, there are $f_1,\dots,f_n,g_1,\dots,g_m\in K$ such that 
$u_1=f_1(a)\wedge\cdots\wedge f_n(a)$ and $u_2=g_1(a)\wedge\dots\wedge g_m(a)$.
Since $t_1K\neq t_2K$, the group elements $t_1f_1,\dots,t_1f_n$ and $t_2g_1,\dots,t_2g_m$ do not belong to the same coset of $\alg K$. Hence, it follows from \eqref{l:kdEtb} that $t_1(u_1)\wedge t_2(u_2)=o$. Consequently, 
\[\phi(\pair {u_1}{t_1} \wedge \pair {u_2}{t_2}) = \phi(o) =  t_1(u_1)\wedge t_2(u_2)  =\phi(\pair{u_1}{t_1})\wedge \phi(\pair{u_2}{t_2})\text.\] 
Thus $\phi$ preserves the meet operation. If $T\cap gtK=\set f$, then 
\[ \phi(g\pair ut ) = \phi( \pair{gtf^{-1}(u)}{f})= f(gtf^{-1}(u))=gt(u)=g\phi(\pair ut)\text. 
\]
Therefore, $\phi$ is a homomorphism.
Clearly, $o$ is a preimage of $o$. To show that $\phi$ is surjective, let $b\in A\setminus\set o$. By Lemma~\ref{l:f1}\eqref{l:f1a}, $b=f_1(a)\wedge\cdots\wedge f_n(a)$ for some $f_1,\ldots,f_n\in \FF$. These $f_i$ belong to the same coset $hK$ by \eqref{l:kdEtb}. Then $h^{-1}f_1,\dots,h^{-1}f_n\in K$, 
\begin{align*}
\pair{ h^{-1}&f_1(a)\wedge\dots\wedge h^{-1}f_n(a)}{h}  \text{ belongs to } \gtwm K{\FF}U,\,\text{ and }\cr
&\phi\bigl(\pair{h^{-1}f_1(a)\wedge\dots\wedge h^{-1}f_n(a)}{h}     \bigr) =  f_1(a)\wedge\dots\wedge f_n(a)=b\text.
\end{align*}
Hence, $\phi$ is surjective. 

Next, we assert that $o$, the zero element of $\alg A$, does not belong to $U$.  To obtain a contradiction, suppose that $o\in U$.  Then $|U|=1$ since otherwise $\alg U$ cannot have a zero element by    
\eqref{l:kdEtc}. Thus $|U|=1$, $U=\set a=\set o$, which implies that $|A|=1$, a contradiction. Hence, $o\notin U$.

We are now in the position to  show that $\phi$ is injective. If $\pair ut\in U\times T$, then $\phi(\pair ut)=t(u)\neq o$ since otherwise $o=t^{-1}(o)=u$ would belong to $U$. Hence the only preimage of $o$ is itself. Assume that $\pair{u_1}{t_1},\pair{u_2}{t_2}\in U\times T$ such that $\phi(\pair{u_1}{t_1})=\phi(\pair{u_2}{t_2})$. Then $t_1(u_1)=t_2(u_2)$, and 
\[o\neq \phi(\pair{u_1}{t_1})  = \phi(\pair{u_1}{t_1}) 
\wedge \phi(\pair{u_2}{t_2})=  
\phi(\pair{u_1}{t_1}\wedge  \pair{u_2}{t_2} )\text. 
\] 
Hence, $\pair{u_1}{t_1}\wedge  \pair{u_2}{t_2}\neq o$ implies that $t_1=t_2$, whence $t_1(u_1)=t_2(u_2)$ entails that $u_1=u_2$. Thus $\pair{u_1}{t_1}=\pair{u_2}{t_2}$, proving that $\phi$ is injective. 
\end{proof}

Next, we define an auxiliary set $\auxset$ as follows. Here $\Sub{\alg\FF}$ is the set of all subgroups of~$\alg\FF$, $\set 1$ is the 1-element subgroup, and $\alg{ o}=\langle\set o;\wedge,\FF\rangle$ is the one-element semilattice over $\alg\FF$.
\begin{equation}
\begin{aligned}
\auxset &=\bigl\{ \pair {K} {\alg o}:
K\in \Sub{\alg\FF}\setminus\set{\FF} 
\bigr\}\, \cup \cr
& \bigl\{ \pair{K}{\freealg{\var S}1}:
K\in \Sub{\alg\FF}\setminus\set{\set1,\FF},\,\,
\var S\in \minqvak{\alg K}\bigr\}
\text.
\end{aligned}
\end{equation}
(Of course, $\freealg{\var S}1$ above and in similar situations is understood as the isomorphism class of  $\freealg{\var S}1$. However,  we   simply speak of free algebras rather than their isomorphism classes.)
Now, we are in the position to formulate the main result of this section.

\begin{theorem}\label{rdThemt} Let $\alg\FF$ be an abelian group. Define a map
\begin{align*}
\gamma\colon \minqvan{\alg\FF}\to \auxset \,\,\text{ by }\,\,
\var R\mapsto\pair{ K_{\var R}}{\alg U_{\var R}},
\end{align*}
where $K_{\var R}$ and $\alg U_{\var R}$ are given before Lemma~\ref{l:kdEt}, and a map
\begin{align*}
\delta\colon \auxset \to  \minqvan{\alg\FF}
\,\,\text{ by }\,\, 
\pair K{\alg U}\mapsto \var Q\bigl(\gtwm K{\FF}U\bigr)\text.
\end{align*}
Then $\gamma$ and $\delta$ are reciprocal bijections. 
\end{theorem}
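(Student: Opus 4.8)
The plan is to verify four things: that $\gamma$ indeed maps into $\auxset$, that $\delta$ indeed maps into $\minqvan{\alg\FF}$, that $\delta\circ\gamma$ is the identity on $\minqvan{\alg\FF}$, and that $\gamma\circ\delta$ is the identity on $\auxset$; together these say exactly that $\gamma$ and $\delta$ are reciprocal bijections. The first, third and fourth are short; the real work, and the step I expect to be the main obstacle, is the second. Note first that by Lemma~\ref{MicoDieAE} the isomorphism type of $\gtwm K{\FF}U$ depends only on $K$ and on the isomorphism type of $\alg U$, so $\delta$ is at any rate unambiguously described.

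\emph{$\gamma$ maps into $\auxset$, and $\delta\circ\gamma=\id$.} Let $\var R\in\minqvan{\alg\FF}$ and write $K=K_{\var R}$, $\alg U=\alg U_{\var R}$. Then $K\in\Sub{\alg\FF}$ by Lemma~\ref{l:kdEt}\eqref{l:kdEta}, and $K\neq\FF$: if $K$ were $\FF$, then by Lemma~\ref{l:kdEt}\eqref{l:kdEtb} together with Lemma~\ref{l:f1}\eqref{l:f1a} every element of $\freealg{\var R}a$ would be different from $o$, contradicting $\var R\in\minqvan{\alg\FF}$. If $\alg U$ is trivial, then $\pair K{\alg U}=\pair K{\alg o}$ lies in the first part of $\auxset$. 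If $\alg U$ is nontrivial, then $K\neq\set1$ (there is no nontrivial $1$-generated ordinary semilattice), and Lemma~\ref{l:kdEt}\eqref{l:kdEtc}--\eqref{l:kdEtd} give $\alg U\cong\freealg{\var S}1$ with $\var S=\var Q(\alg U)\in\minqvak{\alg K}$, so $\pair K{\alg U}$ lies in the second part of $\auxset$. For $\delta\circ\gamma=\id$: Lemma~\ref{l:kdEt}\eqref{l:kdEte} gives $\freealg{\var R}a\cong\gtwm K{\FF}U$, and since $\var R$ is not $\alg\FF$-trivial, Theorem~\ref{t:eq}(A) gives $\var R=\var Q(\freealg{\var R}a)=\var Q(\gtwm K{\FF}U)=\delta(\gamma(\var R))$.

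\emph{$\delta$ maps into $\minqvan{\alg\FF}$.} Fix $\pair K{\alg U}\in\auxset$, choose a transversal $T$ of the cosets of $\alg K$ with $T\cap K=\set1$, put $\alg B=\gtwm K{\FF}U$, and let $a$ generate $\alg U$ (with $a=o$ in case $\alg U=\alg o$). By Lemmas~\ref{MicoDieAE} and \ref{MdHlW}, $\alg B$ is a nontrivial $\alg\FF$-semilattice with zero element $o$, generated by $\pair a1$. By Theorem~\ref{t:eq}(B) it is enough to show that every nonzero element of $\alg B$ generates a subalgebra isomorphic to $\alg B$: then $\var Q(\alg B)$ is minimal, $\freealg{\var Q(\alg B)}1\cong\alg B$ has a zero by Theorem~\ref{t:eq}(B)\eqref{t:eqe}, and therefore $\var Q(\alg B)\in\minqvan{\alg\FF}$. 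So let $\pair ut\in U\times T$ be nonzero, and let $\alg V$ denote the subalgebra $\genpsub{\alg U}u$ of the $\alg K$-semilattice $\alg U$. A direct computation with the operations of \eqref{nfCPlTwm}, using that $g(\pair ut)=\pair{g(u)}t$ for $g\in K$, that $(t^{-1}f)(\pair ut)=\pair uf$ for $f\in T$, and Lemma~\ref{l:f1}\eqref{l:f1a}, shows that $\genpsub{\alg B}{\pair ut}=\set o\cup(V\times T)$, which is exactly $\gtwm K{\FF}V$ (formed with the same transversal). Here the hypothesis that $\alg U=\freealg{\var S}1$ with $\var S\in\minqvak{\alg K}$ is used crucially: it forces $\alg U$ to have no zero, so $\alg V$ is a nontrivial $1$-generated subalgebra of $\alg U$, whence $\alg V\cong\alg U$ by Theorem~\ref{t:eq}(B); and when $\alg U=\alg o$ trivially $\alg V=\alg U$. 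Either way $\genpsub{\alg B}{\pair ut}=\gtwm K{\FF}V\cong\gtwm K{\FF}U=\alg B$, as required. (In the subcase $\alg U=\alg o$ one may alternatively invoke Remark~\ref{remfanshaped} and Proposition~\ref{soZRc}.)

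\emph{$\gamma\circ\delta=\id$.} Keeping the notation above, let $\var R=\var Q(\alg B)$. As $\alg B$ is a nontrivial $1$-generated member of the minimal quasivariety $\var R$, Theorem~\ref{t:eq}(B)\eqref{t:eqe} gives $\alg B\cong\freealg{\var R}1$, so $K_{\var R}$ and $\alg U_{\var R}$ may be read off from the generator $\pair a1$. For $g\in K$ one has $g(\pair a1)=\pair{g(a)}1$, hence $\pair a1\wedge g(\pair a1)=\pair{a\wedge g(a)}1\neq o$; for $g\notin K$ the second coordinate of $g(\pair a1)$ differs from $1$, so $\pair a1\wedge g(\pair a1)=o$. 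Thus $K_{\var R}=K$. An easy induction on the term, using $g(\pair v1)=\pair{g(v)}1$ for $g\in K$ and the meet of $\alg B$, gives $t(\pair a1)=\pair{t(a)}1$ for every unary $\alg K$-term $t$; hence $U_{\var R}=U\times\set1$ and $u\mapsto\pair u1$ is a $\alg K$-semilattice isomorphism $\alg U\to\alg U_{\var R}$. Therefore $\gamma(\var R)=\pair{K_{\var R}}{\alg U_{\var R}}=\pair K{\alg U}$, completing the proof that $\gamma$ and $\delta$ are reciprocal bijections. The anticipated bottleneck is the subalgebra computation in the second part, together with the observation that omitting the no-zero hypothesis on $\alg U$ would produce the spurious subalgebras $\gtwm K{\FF}o\not\cong\gtwm K{\FF}U$.
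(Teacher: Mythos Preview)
Your proof is correct and follows the same four-step plan as the paper's: verify that $\gamma$ and $\delta$ land in the right sets and that the two composites are identities, using the same appeals to Lemma~\ref{l:kdEt}, Lemma~\ref{MdHlW}, Theorem~\ref{t:eq}, Remark~\ref{remfanshaped}, and Proposition~\ref{soZRc}. The only cosmetic difference is in the second step: you identify the subalgebra $\genpsub{\alg B}{\pair ut}$ explicitly as $\gtwm K{\FF}V$ with $\alg V=\genpsub{\alg U}u$ and then invoke $\alg V\cong\alg U$, whereas the paper writes down the isomorphism $\pair{r(a)}t\mapsto\pair{r(b)}t$ directly; these are two views of the same computation.
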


\begin{remark}\label{remdifD}
Theorem~\ref{rdThemt} reduces the difficulty to $\auxset$. That is, if we could describe $\minqvak{\alg K}$ for all nontrivial subgroups $K\in\Sub{\alg \FF}$, including $K=\FF$, then we would obtain a full description of $\minqvar{\alg K}$. Generally, this seems to be hopeless in view of Section~\ref{sectiExmpl}. 
\end{remark}

\begin{remark} If $\alg\FF$ is finite, then $\minqvak{\alg\FF}=\emptyset$ by Lemma~\ref{l:f1}\eqref{l:f1d}. Thus $\alg U_{\var R}=\alg o$, $\gtwm K{\FF}o\cong \qalgeb{K}{\alg\FF}$, and Theorem~\ref{rdThemt} reduces to Theorem~\ref{fIndSrpt}. That is, Theorem~\ref{rdThemt}, together with  Lemma~\ref{l:f1}\eqref{l:f1d}, implies Theorem~\ref{fIndSrpt}. However, the easy proof of Theorem~\ref{fIndSrpt} in Section~\ref{seCfinigrp} is justified by the fact that the proofs in Section~\ref{secNonFini} are much more complicated.
\end{remark}

\begin{proof}[Proof of Theorem~\ref{rdThemt}]
First we show that $\gamma$ is a map from $\minqvan{\alg\FF}$ to $\auxset$.
Let $\var R\in \minqvan{\alg\FF}$, $K=K_{\var R}$ and $\alg U= \alg U_{\var R}$.
By Lemma~\ref{l:kdEt}, it suffices to show that $K\neq\FF$ and that $|U|\neq 1$ implies $K\neq\set1$. Striving for contradiction, suppose that $K=\FF$. Since $o$ belongs to $\freealg{\var R}a$, Lemma~\ref{l:f1}\eqref{l:f1a} implies the existence of $f_1,\ldots,f_n\in\FF$ such that $o=f_1(a)\wedge\cdots\wedge f_n(a)$. But this contradicts Lemma~\ref{l:kdEt}\eqref{l:kdEtb} since any two cosets of $K=G$ are equal. Thus $K\neq \FF$. Clearly, if $K=\set 1$, then every unary $\alg K$-semilattice term induces the identity map and $|U|=1$. Hence, $|U|\neq 1$ implies that $K\neq\set 1$. This proves that $\gamma$ is a map from $\minqvan{\alg\FF}$ to $\auxset$.

Next, we show that $\delta$ is a $\auxset\to \minqvan{\alg\FF}$ map. It follows easily from Proposition~\ref{soZRc}(A)  and Remark~\ref{remfanshaped}  that
$\delta(\pair{K}{\alg o})= \qalgeb K{\alg\FF}$ belongs to $\minqvan{\alg\FF}$, provided $\FF\neq K\in\Sub{\alg \FF}$. 
So let $\pair K{\alg U}:=\pair{K}{\freealg{\var S}a}$ be in $\auxset$, where 
$K$ belongs to 
$\Sub{\alg\FF}\setminus\set{\set1,\FF}$ and $
\var S$ belongs to $\minqvak{\alg K}$. 
We have to show that $\delta(\pair K{\alg U})= \var Q\bigl(\gtwm K{\FF}U\bigr)$ belongs to $\minqvan{\alg\FF}$. 
As in \eqref{nfCPlTwm}, let $T$ be a set of representatives of the cosets of $K$ such that $1\in T$.
We know from Lemma~\ref{MdHlW} that, for every $t\in T$, 
\begin{equation}\label{sKvMYx}
\text{$\alg A=\gtwm K{\FF}U$ is generated by $\pair at$.}
\end{equation}
By Theorem~\ref{t:eq},  it suffices to prove that 
for each $\pair bt$ of $\alg A$,
the subalgebra  
$\alg B=\genpsub{\alg A}{\pair bt}$ is isomorphic to $\alg A$. 

Define a map $\phi\colon\alg A\to \alg B$ by
$o\mapsto o$ and 
$\pair {r(a)}t\mapsto \pair {r(b)}t$, where $r$ denotes an arbitrary unary $\alg K$-semilattice term. Since $\alg U= \freealg{\var S}a$ has no zero, it follows from Theorem~\ref{t:eq}(B) that there exists 
a $\alg K$-semilattice  isomorphism $\psi$ from $\alg U=\genpsub{\alg U}a$ onto $\genpsub{\alg U}b$ with $\psi(a)=b$. This implies that $\phi$ is a well-defined map  and  a bijection. 
By \eqref{sKvMYx}, each element of $\alg A$ is of the form $s(\pair at)$ for an appropriate unary $\alg \FF$-semilattice term $s$. Clearly, each element of $\alg B$ is of the form $s(\pair bt)$.
Observe that, in \eqref{nfCPlTwm}, $gtf^{-1}\in K$. Thus it is clear from the definition of $\gtwm K{\FF}U$ that the action of $s$ on a pair $\pair xt$ depends 
on two ingredients. First, it depends on how we compute within $\alg U$; from this aspect,  $\psi$ allows us to replace $a$ with $b$. Second, on how we compute with group elements; then $x$ is irrelevant. Consequently, $\phi$ is an isomorphism. 
Therefore, $\alg A=\gtwm K{\FF}U$ satisfies condition \eqref{t:eqb} of Theorem~\ref{t:eq}. Since $\delta(\pair K{\alg U})=\var Q(\alg A)$, \eqref{t:eqa}, \eqref{t:eqb}, and \eqref{t:eqe} of
Theorem~\ref{t:eq} yield that there is an isomorphism 
\begin{equation}\label{lsTZmc}
\text{$\gtwm K{\FF}U\to  \freealg{\delta(\pair K{\alg U})}{d}$ with $\pair a1\mapsto d$, }
\end{equation}
and that $\delta(\pair K{\alg U})\in \minqvan{\alg\FF}$. This proves that $\delta$ is a map from $\auxset$ to $\minqvan{\alg\FF}$.

Let $\var R\in \minqvan{\alg\FF}$. 
Since $\gtwm {K_{\var R}}{\FF}U_{\var R}\cong \freealg{\var R}1$ by Lemma~\ref{l:kdEt}\eqref{l:kdEte} and  $\var R$ is generated by any of its nontrivial algebra, we obtain that 
\[\delta\bigl(\gamma(\var R)\bigr)=\delta(\pair{K_{\var R}}{\alg U_{\var R}}) = \var Q\bigl(\gtwm {K_{\var R}}{\FF}U_{\var R}\bigr)= \var Q\bigl(\freealg{\var R}1 \bigr) =\var R\text.
\]
That is,  $\delta\circ\gamma$ is the identity map on $\minqvan{\alg\FF}$.

Next, to show that $\gamma\circ\delta$ is the identity map on $\auxset$, assume that $\pair K{\alg U}$ belongs to $ \auxset$. We distinguish two cases. 

First, assume that $\alg U$ is nontrivial. Let $\var R=\delta(\pair K{\alg U})=\var Q\bigl(\gtwm K{\FF}U\bigr)$. Then,  
by  \eqref{lsTZmc}, we can compute $\gamma\bigl(\delta(\pair K{\alg U})\bigr)= \gamma(\var R) =\pair{K_{\var R}}{\alg U_{\var R}}$ based on the algebra $\gtwm K{\FF}U$ and its free generator $\pair a1$. Hence $K_{\var R}=\{g\in \FF: \pair a1\wedge g(\pair a1)\neq o\}$. Since $o\notin U$, it is clear from definitions that $K_{\var R}=K$. It is also clear that the $\alg K$-semilattice  generated by $\pair a1$ is $U\times\set 1$, which is isomorphic to $\alg U$. Therefore,
since now isomorphic algebras are treated as equal ones, $\gamma\bigl(\delta(\pair K{\alg U})\bigr)=\pair{K}{\alg U}$.

Second, assume that $\alg U=\alg o$. 
Let $\var R=\delta(\pair K{\alg o})$.
By definitions, Remark~\ref{remfanshaped}, and Proposition~\ref{soZRc}(A), we obtain that 
\[\var R= Q\bigl(\gtwm {K}{\FF}o\bigr) = 
 Q\bigl(\qalgeb{K}{\FF}\bigr)\text{ and } \qalgeb{K}{\FF}\cong \freealg{\var R}1 \text.
\]
Hence, instead of $\freealg{\var R}1$, we can compute  $\gamma(\var R)=\pair{K_{\var R}}{\alg U_{\var R}}$ from  $\qalgeb{K}{\FF}$ and its generating element $K=1K$. Hence, clearly, we obtain that $\gamma(\var R)=\pair{K}{\alg o}$, that is, 
$\gamma\bigl(\delta(\pair K{\alg U})\bigr)=\pair K{\alg U}$. 
Thus $\gamma\circ\delta$ is the identity map.
\end{proof}

\section{An example}\label{sectiExmpl}
Since $\minqvak{\algcikl \infty}$ is less complicated than  $\minqvan{\algcikl \infty}$, see at the beginning of Section~\ref{secNonFini}, one might hope that $\minqvak{\alg\FF}$ can somehow be described for any abelian group $\alg\FF$. This hope is minimized by the following example.

Consider $\algcikl \infty^2$, the direct square of the infinite cyclic group. 
It is generated, in fact freely generated, by $\set{(g,1),(1,g)}$. It has only countably many subgroups by, say, 
 W.\,R.~Scott~\cite[Theorem 5.3.5]{Scott}. Hence, if we had that $|\minqvak{\algcikl \infty}|\leq \aleph_0$, then 
$|\minqvar{\algcikl \infty^2}| = \aleph_0$ would follow from 
Theorem~\ref{rdThemt}, and we could expect a reasonable description of  $\minqvar{\algcikl \infty^2}$. However, we construct continuously many members of $\minqvak{\algcikl \infty^2}$.

Given an irrational number $\alpha$, let $B_\alpha=\set{m+n\alpha: m,n\in\mathbb Z}$. Define the action of 
$\pair{g^i}{g^j}$  by $\pair{g^i}{g^j}(m+n\alpha)=m+i+(n+j)\alpha$. This way we obtain a $\algcikl \infty^2$-semilattice 
$\alg B_\alpha=\langle B_\alpha;\wedge,\ciklhat{\infty}2 \rangle$, where $\wedge$ is the minimum with respect to the usual order of real numbers. 

\begin{example} For each irrational number $\alpha$, $\var Q(\alg B_\alpha)\in\minqvak{\algcikl \infty^2}$.
If $\alpha$ and $\beta$ are distinct irrational numbers, then $\var Q(\alg B_\alpha)\neq \var Q(\alg B_\beta)$.
\end{example}

\begin{proof} The first part follows from 
Theorem~\ref{t:eq} since $\alg B_\alpha$ is generated by each of its elements. To prove the second part, assume that $\alpha<\beta$. We can pick $p,q\in \mathbb Z$ such that $\alpha<p/q < \beta$. Since $q\alpha < p < q\beta$, the identity 
\[\pair{g^p}1(x) \wedge \pair 1{g^q}(x) \approx \pair 1{g^q}(x) 
\]
holds in $\var Q(\alg B_\alpha)$ but fails in $\var Q(\alg B_\beta)$.
\end{proof}

%

\begin{ackno} The author is deeply indebted to Dr.\  Mikl\'os Mar\'oti  for an excellent  introduction to Universal Algebra,   for raising the problem the present paper deals with, and for his valuable friendly support since then. Also, the help obtained from Dr.\  \'Agnes Szendrei is gratefully acknowledged. 
\end{ackno}

\end{document}